\documentclass[a4paper,12pt]{article}

%
%

\usepackage{color}

%
%


\usepackage{makeidx}
\usepackage{bm}
\usepackage{latexsym}
\usepackage{amscd}
\usepackage{amsmath}
\usepackage{amssymb}

\usepackage{stmaryrd}

\usepackage{amsthm}
\usepackage{float}
\usepackage{graphicx}
\usepackage{psfrag}

%
%

\theoremstyle{plain}
\newtheorem{theorem}{Theorem}[section]
\newtheorem{corollary}[theorem]{Corollary}

\newtheorem{conjecture}[theorem]{Conjecture}

%
%

\theoremstyle{definition}
\newtheorem{definition}[theorem]{Definition}
\newtheorem{remark}[theorem]{Remark}
\newtheorem{example}[theorem]{Example}
\newtheorem{proposition}[theorem]{Proposition}

%
%




%
%

\newcommand{\bC}{\ensuremath{\mathbb{C}}}

\newcommand{\bP}{\ensuremath{\mathbb{P}}}

\newcommand{\bR}{\ensuremath{\mathbb{R}}}

\newcommand{\bZ}{\ensuremath{\mathbb{Z}}}

%
%

\newcommand{\scA}{\ensuremath{\mathcal{A}}}
\newcommand{\scB}{\ensuremath{\mathcal{B}}}

\newcommand{\scF}{\ensuremath{\mathcal{F}}}

\newcommand{\scL}{\ensuremath{\mathcal{L}}}

%
%

%
%

\newcommand{\sfM}{\mathsf{M}}

\newcommand{\ch}{\mathsf{ch}}

%
%


\newcommand{\Sep}{\operatorname{Sep}}
\newcommand{\Ker}{\operatorname{Ker}}


\newcommand{\A}{\scA}
\newcommand{\Coker}{\operatorname{Coker}}
\newcommand{\mult}{\operatorname{mult}}
\newcommand{\R}{\bR}
\newcommand{\RB}{\operatorname{RB}}


\definecolor{deepblue}{cmyk}{0,0.83,1,0.70}
\definecolor{gray}{cmyk}{0,0,0,0.3}
\definecolor{rred}{cmyk}{0,1,1,0}
\definecolor{chairo}{cmyk}{0,0.83,1,0.70}
\definecolor{roypur}{cmyk}{0.75,0.90,0,0.1}
\definecolor{darkorc}{cmyk}{0.40,0.80,0.20,0}
\definecolor{oliv}{cmyk}{0.64,0.00,0.75,0.56}
\definecolor{azuro}{cmyk}{1,1,0,0.46}

%
%

\title{Milnor fibers of real line arrangements}
\author{Masahiko Yoshinaga\thanks{Department of Mathematics, 
Hokkaido University, 
North 10, West 8, Kita-ku, 
Sapporo 060-0810, 
JAPAN 
E-mail: yoshinaga@math.sci.hokudai.ac.jp}}
\date{\today}
\pagestyle{plain}

%
%

\begin{document}
\maketitle

\begin{abstract}
We study Milnor fibers of complexified real line arrangements. 
We give a new algorithm computing monodromy 
eigenspaces of the first cohomology. 
The algorithm is based on the description of 
minimal CW-complexes homotopic to the complements, 
and uses the real figure, that is, the adjacency relations 
of chambers. 
It enables us to generalize a 
vanishing result of Libgober, give new upper-bounds and 
characterize the $A_3$-arrangement in terms of 
non-triviality of Milnor monodromy. 

\end{abstract}

\section{Introduction}

The Milnor fiber is a central object in the study of the 
topology of complex hypersurface singularities. 
In particular, the monodromy action 
on its cohomology groups has been 
intensively studied. Monodromy eigenspaces contain subtle geometric 
information. For example, for projective plane curves, 
the Betti numbers of Milnor fiber of the cone detect Zariski 
pairs \cite{eab}. In other words, Betti numbers of Milnor fiber 
of the cone of a plane curve are not in general determined by local and 
combinatorial data of singularities. 

In the theory of hyperplane arrangements, one of the central problems 
is to what extent topological invariants of the complements are 
determined combinatorially. For example, the cohomology ring 
is combinatorially determined (Orlik and Solomon \cite{orl-sol}), 
while the fundamental group is not 
(Rybnikov \cite{eab, ryb}). Between these two cases, 
local system cohomology groups and monodromy eigenspaces 
of Milnor fibers recently received a considerable amount of attention. 

There are several ways to compute monodromy eigenspaces of the 
Milnor fiber, especially for line arrangements. 
One is the topological method developed by Cohen and Suciu 
\cite{coh-suc}. They first give a presentation of the fundamental 
group of the complement. Then, using Fox calculus, they compute the 
monodromy eigenspaces. Another approach is the algebraic method, which 
computes the multiplicities of monodromy eigenvalues as the superabundance 
of singular points. 
This approach has recently been well developed, especially for 
line arrangements having only double and triple points 
\cite{lib-cat}.  

The purpose of this paper is to develop a topological method of 
computing Milnor monodromy for complexified real arrangements 
following 
Cohen and Suciu. The new ingredient is a recent study of minimal 
cell structures for the complements of complexified real 
arrangements \cite{yos-lef, yos-str}. 
By using the description of twisted minimal chain complexes, we 
obtain an algorithm which computes monodromy eigenspaces directly 
from real figures without passing through the presentations of $\pi_1$. 

The paper is organized as follows. 
In \S\ref{sec:pre} we recall a few results which 
are used in this paper. 
\S\ref{sec:main} is the main section of the paper. 
First, in \S\ref{sec:resban}, 
we introduce discrete geometric notions, the so-called 
{\em $k$-resonant band} and the {\em standing wave} on this band. 
These notions are used in \S\ref{sec:eigensp} for 
the computation of eigenspaces. 
Several consequences of our algorithm are discussed 
in \S\ref{sec:vanish}, \S\ref{subsec:upper} and 
\S\ref{sec:A3}. 
Among other things, we prove 
that if the arrangement contains more than $6$ lines and the 
cohomological monodromy action (of degree one) is non-trivial, 
then each line has at least three multiple points (see 
Corollary \ref{cor:3pts} for a precise statement). 
Such arrangements have been studied in discrete geometry as 
``configurations'', and 
several examples are provided in \cite{gru-simp, gru-conf}. 
In \S\ref{sec:exconj}, 
we apply our algorithm to arrangements appearing in papers by Gr\"unbaum 
\cite{gru-simp, gru-conf}. We also present several examples and 
conjectures.









\section{Preliminaries}
\label{sec:pre}
\subsection{Milnor fiber of arrangements}
\label{sec:milnor}

Let $\A=\{H_1, \dots, H_n\}$ be an affine line arrangement in 
$\R^2$ with the defining equation $Q_\A(x, y)=\prod_{i=1}^n\alpha_i$, 
where $\alpha_i$ is a defining linear equation for $H_i$. 
In this paper, we assume that not all lines are parallel 
(or equivalently, $\A$ has at least one intersection). 
The coning $c\A$ of $\A$ is an arrangement of $n+1$ 
planes in $\R^3$ defined by the equation 
$Q_{c\A}(x, y, z)=z^{n+1}Q(\frac{x}{z}, \frac{y}{z})$. 
The line $\{z=0\}\in c\A$ is called the line at infinity and 
is denoted by $H_\infty$. 
The space $\sfM(\A)=\bC^2\setminus\{Q_\A=0\}=
\bP_\bC^2\setminus\{Q_{c\A}=0\}$ is called the complexified 
complement. In this article, $\A$ always denotes a line arrangement 
in $\R^2$ and $c\A$ denotes a line arrangement in $\R\bP^2$. We call 
$p\in\R\bP^2$ a {\em multiple point} if the multiplicity of 
$c\A$ at $p$ (that is, the number of lines passing through $p$) 
is greater than or equal to $3$. 

\begin{definition}
$F_\A=
\{(x, y, z)\in\bC^3\mid Q_{c\A}(x, y, z)=1)\}$ is called the 
Milnor fiber of $\A$. The automorphism 
$\rho:F_\A\longrightarrow F_\A,\ (x, y, z)\longmapsto 
(\zeta x, \zeta y, \zeta z)$, with $\zeta=\exp(2\pi i/(n+1))$, 
is called the monodromy action. 
\end{definition}
The automorphism $\rho$ has order $n+1$. It generates 
the cyclic group $\langle\rho\rangle\simeq\bZ/(n+1)\bZ$. 
The monodromy $\rho$ induces a linear map 
$\rho^*:H^1(F_\A, \bC)\longrightarrow H^1(F_\A, \bC)$. 
Since $(\rho^*)^{n+1}$ is the identity, we have the 
eigenspace decomposition 
$H^1(F_\A, \bC)=\bigoplus_{\lambda^{n+1}=1}H^1(F_\A, \bC)_\lambda$, 
where $H^1(F_\A, \bC)_\lambda$ is the the set of 
$\lambda$-eigenvectors with eigenvalue $\lambda\in\bC^*$. 
When $\lambda=1$, $H^1(F_\A)_1=H^1(F_\A)^{\rho^*}$ is the 
subspace of elements fixed by $\rho^*$, which is isomorphic to 
$H^1(F_\A/\langle\rho\rangle)$. 
It is easily seen that the quotient by the monodromy 
action is 
$F_\A/\langle\rho\rangle\simeq\sfM(\A)$. 
Therefore, the $1$-eigenspace of the first cohomology is 
combinatorially determined, 
$H^1(F_\A)_1\simeq H^1(\sfM(\A))\simeq\bC^n$. 
In general, let $\scL_\lambda$ be a complex rank one local system 
associated with a representation 
$$
\pi_1(\sfM(\A))\longrightarrow\bC^*,\ 
\gamma_H\longmapsto\lambda, 
$$
where $\gamma_H$ is a meridian loop of the line $H$. 
Then it is known that 
\begin{equation}
\label{eq:eigen}
H^1(F_\A)_\lambda\simeq H^1(\sfM(\A), \scL_\lambda). 
\end{equation}
(See \cite{coh-suc} for details.) 

\subsection{Multinets and Milnor monodromy}
\label{sec:multinet}

In this section, we recall a relation between the combinatorial 
structures known as multinets and the eigenvalues of 
Milnor monodromy. We note that a $k$-multinet gives a lower 
bound on the eigenspace. 

\begin{definition}
A {\em $k$-multinet} on $c\A$ is a pair 
$(\mathcal{N}, \mathcal{X})$, where $\mathcal{N}$ is a 
partition of $c\A$ into $k\geq 3$ classes 
$\A_1, \dots, \A_k$ and $\mathcal{X}$ is a set of multiple points 
such that 
\begin{itemize}
\item[(i)] 
$|\A_1|=\cdots=|\A_k|$; 
\item[(ii)] 
$H\in\A_i$ and $H'\in\A_j$ ($i\neq j$) imply that $H\cap H'\in\mathcal{X}$; 
\item[(iii)] 
for all $p\in\mathcal{X}$, $|\{H\in\A_i\mid H\ni p\}|$ is 
constant and independent of $i$;  
\item[(iv)] 
for any $H, H'\in\A_i$ ($i=1, \dots, k$), there is a sequence 
$H=H_0, H_1, \dots, H_r=H'$ in $\A_i$ such that 
$H_{j-1}\cap H_j\notin\mathcal{X}$ for $1\leq j\leq r$. 
\end{itemize}
\end{definition}
The following is a consequence of 
\cite[Theorem 3.11]{fal-yuz} and \cite[Theorem 3.1 (i)]{dim-pap}
\begin{theorem}
\label{thm:multinet}
Suppose there exists a $k$-multinet on $c\A$ for some $k\geq 3$ and 
set $\lambda=e^{2\pi i/k}$. Then 
$$
\dim H^1(F_\A)_\lambda\geq k-2. 
$$
\end{theorem}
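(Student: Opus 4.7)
The plan is to reduce, via the isomorphism~(\ref{eq:eigen}), to producing a $(k-2)$-dimensional subspace of $H^1(\sfM(\A),\scL_\lambda)$, and then to realize that subspace as the pullback of a twisted first cohomology group on a $k$-punctured $\bP^1$. The intermediate object is a Ceva-type pencil attached to the multinet.

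First, set $Q_i=\prod_{H\in\A_i}\alpha_H$ for $i=1,\dots,k$. Using axioms (i)--(iv), one shows that the full collection $\{Q_1,\dots,Q_k\}$ spans only a two-dimensional subspace of homogeneous polynomials of degree $d=|\A_i|$, and that the base locus of this linear system is contained in $\mathcal{X}$. Consequently the assignment $p\mapsto[Q_1(p):Q_2(p)]$ defines a rational map $\psi:\bP^2_\bC\dashrightarrow\bP^1$ whose $k$ completely reducible fibers are precisely the $k$ classes of $c\A$. Removing these fibers and the base locus yields a surjective holomorphic map
\[
\bar\psi:\sfM(\A)\longrightarrow B:=\bP^1\setminus\{p_1,\dots,p_k\},
\]
and axiom (iv), together with a Bertini-type argument, implies that $\bar\psi$ has connected generic fiber.

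Next, let $\scL$ be the rank one local system on $B$ whose monodromy around each puncture $p_i$ equals $\lambda=e^{2\pi i/k}$ (well defined since $\lambda^k=1$). An Euler-characteristic calculation, combined with the vanishing of $H^0$ and $H^2$ for a nontrivial rank one local system on an open curve, yields $\dim H^1(B,\scL)=k-2$. Because every line $H\in\A_i$ appears with multiplicity $1$ in the divisor cut out by $Q_i$, the meridian $\gamma_H$ pushes forward to a loop in $B$ freely homotopic to a small loop around $p_i$; hence $\bar\psi^*\scL$ has monodromy $\lambda$ along $\gamma_H$ for every $H\in c\A$. Since $\pi_1(\sfM(\A))$ is generated by such meridians, this identifies $\bar\psi^*\scL\cong\scL_\lambda$. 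Invoking the Leray spectral sequence of $\bar\psi$, the connectedness of the generic fiber gives $R^0\bar\psi_*\bar\psi^*\scL\cong\scL$, so the bottom-edge map
\[
H^1(B,\scL)\hookrightarrow H^1(\sfM(\A),\bar\psi^*\scL)\cong H^1(\sfM(\A),\scL_\lambda)
\]
is injective, producing the claimed $(k-2)$-dimensional subspace.

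The main obstacle is not the cohomology of $B$ but the passage from the purely combinatorial axioms (i)--(iv) to a genuine pencil: one has to show that the $k$ products $Q_1,\dots,Q_k$ lie in a two-dimensional linear family, that their base locus is swept out by $\mathcal{X}$, and that the resulting map has connected generic fiber. This is the content of the Falk--Yuzvinsky theorem cited in the statement, and rather than reprove it I would apply it as a black box; the remaining steps are then a standard twisted-coefficient Euler characteristic computation on a punctured sphere together with a Leray edge-map argument.
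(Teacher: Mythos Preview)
Your proposal is correct and follows precisely the route the paper has in mind: the paper does not prove Theorem~\ref{thm:multinet} at all but simply records it as a consequence of \cite[Theorem~3.11]{fal-yuz} (the pencil attached to a multinet) and \cite[Theorem~3.1(i)]{dim-pap} (the transfer of cohomology along that pencil), and your argument is exactly an unpacking of how those two ingredients combine via~(\ref{eq:eigen}). The only comment is that your injectivity step is cleanest if phrased through the surjection $\pi_1(\sfM(\A))\twoheadrightarrow\pi_1(B)$ coming from the connected generic fiber, together with the fact that $B$ is a $K(\pi,1)$; this avoids worrying about whether every fiber of $\bar\psi$ is connected when identifying $R^0\bar\psi_*\bar\psi^*\scL$.
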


\subsection{Twisted minimal cochain complexes}
\label{sec:twisted}

In this section, we recall the construction of the 
twisted minimal cochain complex from 
\cite{yos-lef, yos-ch, yos-loc}, 
which will be used for the computation of the 
right hand side of (\ref{eq:eigen}). 

A connected component of $\bR^2\setminus\bigcup_{H\in\scA}H$ 
is called a chamber. The set of all chambers is denoted by 
$\ch(\scA)$. A chamber $C\in\ch(\A)$ is called bounded 
(resp. unbounded) if the area is finite (resp. infinite). 
For an unbounded chamber $U\in\ch(\A)$, the opposite unbounded 
chamber is denoted by $U^\lor$ (see \cite[Definition 2.1]{yos-loc} for 
the definition; see also Figure \ref{fig:numbering} below). 

Let $\scF$ be a generic flag in $\bR^2$
$$
\scF:
\emptyset=
\scF^{-1}\subset
\scF^{0}\subset
\scF^{1}\subset
\scF^{2}=\bR^2, 
$$
where $\scF^k$ is a generic $k$-dimensional 
affine subspace. 

\begin{definition}
For $k=0, 1, 2$, define the subset 
$\ch^k_\scF(\scA)\subset\ch(\scA)$ by 
$$
\ch^k_\scF(\scA):=
\{C\in\ch(\scA)\mid C\cap\scF^k\neq\emptyset, 
C\cap\scF^{k-1}=\emptyset\}.
$$ 
\end{definition}
The set of chambers decomposes into a 
disjoint union as 
$\ch(\scA)=
\ch^0_\scF(\scA)\sqcup
\ch^1_\scF(\scA)\sqcup
\ch^2_\scF(\scA)$. The cardinality of 
$\ch^k_\scF(\scA)$ is equal to $b_k(\sfM(\A))$ 
for $k=0, 1, 2$.

We further assume that 
the generic flag $\scF$ satisfies the following 
conditions: 
\begin{itemize}
\item $\scF^1$ does not separate intersections of $\scA$, 
\item $\scF^0$ does not separate $n$-points 
$\scA\cap\scF^1$. 
\end{itemize}
Then we can choose coordinates $x_1, x_2$ so that 
$\scF^0$ is the origin $(0,0)$, 
$\scF^1$ is given by $x_2=0$, all intersections of $\scA$ 
are contained in the upper-half plane $\{(x_1, x_2)\in\bR^2\mid
x_2>0\}$ and $\scA\cap\scF^1$ is contained in the 
half-line $\{(x_1, 0)\mid x_1>0\}$. 

We set $H_i\cap\scF^1$ to have coordinates $(a_i, 0)$. 
By changing the numbering of lines and the signs of 
the defining equation $\alpha_i$ of $H_i\in\scA$ 
we may assume that 
\begin{itemize}
\item $0<a_1<a_2<\dots<a_n$, 
\item the origin $\scF^0$ is contained in the negative 
half-plane $H_i^-=\{\alpha_i<0\}$. 
\end{itemize}
We set 
$\ch_0^\scF(\scA)=\{U_0\}$ and 
$\ch_1^\scF(\scA)=\{U_1, \dots, U_{n-1}, U_0^\lor\}$ so that 
$U_p\cap\scF^1$ is equal to the interval $(a_p, a_{p+1})$ for 
$p=1, \dots, n-1$. 
It is easily seen that the 
chambers $U_0, U_1, \dots, U_{n-1}$ and $U_0^\lor$ 
have the following expression: 
\begin{equation}
\begin{split}
&U_0=\bigcap_{i=1}^n\{\alpha_i<0\},\\
&U_p=
\bigcap_{i=1}^{p}\{\alpha_i>0\}\cap
\bigcap_{i=p+1}^n\{\alpha_i<0\},\ (p=1, \dots, n-1),\\
&U_0^\lor=
\bigcap_{i=1}^{n}\{\alpha_i>0\}.
\end{split}
\end{equation}
The notations introduced to this point are illustrated 
in Figure \ref{fig:numbering}.

\begin{figure}[htbp]
\begin{picture}(100,100)(20,0)
\thicklines

\put(70,20){\circle*{4}}
\put(40,27){$\scF^0(0,0)$}

\multiput(50,20)(3,0){83}{\circle*{1}}
\put(300,20){\vector(1,0){0}}
\put(285,24){$\scF^1$}

\put(280,0){\line(-2,1){200}}
\put(240,20){\circle*{3}}
\put(243,24){$a_5$}
\put(278,-10){$H_5$}

\put(200,0){\line(0,1){100}}
\put(200,20){\circle*{3}}
\put(203,24){$a_4$}
\put(195,-10){$H_4$}

\put(170,0){\line(0,1){100}}
\put(170,20){\circle*{3}}
\put(173,24){$a_3$}
\put(165,-10){$H_3$}

\put(140,0){\line(0,1){100}}
\put(140,20){\circle*{3}}
\put(143,24){$a_2$}
\put(135,-10){$H_2$}

\put(60,0){\line(2,1){200}}
\put(100,20){\circle*{3}}
\put(94,24){$a_1$}
\put(55,-10){$H_1$}

\put(118,52){$U_0$}
\put(210,52){$U_0^\lor$}

\put(118,0){$U_1$}
\put(150,0){$U_2$}
\put(180,0){$U_3$}
\put(210,0){$U_4$}

\put(118,90){$U_4^\lor$}
\put(150,90){$U_2^\lor$}
\put(180,90){$U_3^\lor$}
\put(210,90){$U_1^\lor$}

\put(143,52){$C_1$}
\put(183,52){$C_2$}

\put(280,92){$\ch^0_\scF(\scA)=\{U_0\}$}
\put(280,75){$\ch^1_\scF(\scA)=\{U_0^\lor, U_1,\dots,U_4\}$}
\put(280,58){$\ch^2_\scF(\scA)=\{U_1^\lor,\dots,U_4^\lor,
C_1,C_2\}$}

\end{picture}
     \caption{Numbering of lines and chambers.}\label{fig:numbering}
\end{figure}
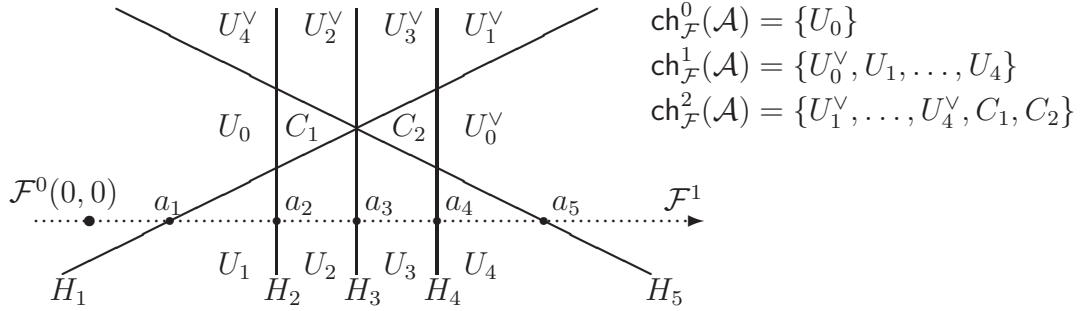

Let $\scL$ be a complex rank-one local system 
on $\sfM(\scA)$. The local system $\scL$ is determined by 
non-zero complex numbers (monodromy around $H_i$) 
$q_i\in\bC^*$, $i=1, \dots, n$. 
Fix a square root $q_i^{1/2}\in\bC^*$ for each $i$. 

\begin{definition}
\label{def:sep}
$(1)$ 
For $C, C'\in\ch(\A)$, let us denote by 
$\Sep(C,C')$ the set of lines $H_i\in\A$ which 
separate $C$ and $C'$. \\
$(2)$ 
Define the complex number $\Delta(C,C')\in\bC$ by 
$$
\Delta(C,C'):=
\prod_{H_i\in\Sep(C,C')}q_i^{1/2}
-
\prod_{H_i\in\Sep(C,C')}q_i^{-1/2}. 
$$
\end{definition}
Now we construct the cochain complex $(\bC[\ch^\bullet_\scF(\scA)], d_\scL)$. 
\begin{itemize}
\item[(i)] The map 
$d_\scL:\bC[\ch^0_\scF(\scA)]
\longrightarrow\bC[\ch^1_\scF(\scA)]$ is defined by 
$$
d_\scL([U_0])=
\Delta(U_0,U_0^\lor)[U_0^\lor]+\sum_{p=1}^{n-1}\Delta(U_0, U_p)[U_p]. 
$$
\item[(ii)] 
$d_\scL:\bC[\ch^1_\scF(\scA)]
\longrightarrow\bC[\ch^2_\scF(\scA)]$ is defined by 
\begin{equation*}
\begin{split}
d_\scL([U_p])&=
-\sum_{\substack{C\in\ch^2_\scF(\A) \\ \alpha_p(C)>0\\ \alpha_{p+1}(C)<0}}\Delta(U_p, C)[C]
+\sum_{\substack{C\in\ch^2_\scF(\A) \\ \alpha_p(C)<0\\ \alpha_{p+1}(C)>0}}\Delta(U_p, C)[C],\ (\mbox{for }p=1, \dots, n-1),
\\
d_\scL([U_0^\lor])&=
-\sum_{\alpha_{n}(C)>0}\Delta(U_0^\lor, C)[C]. 
\end{split}
\end{equation*}
\end{itemize}

\begin{example}
Let $\A=\{H_1, \dots, H_5\}$, and let the flag $\scF$ be as in Figure 
\ref{fig:numbering}. Then 
$$
d_\scL([U_0])=
([U_1], [U_2], [U_3], [U_4], [U_0^\lor])
\begin{pmatrix}
q_{1}^{1/2}-q_{1}^{-1/2}\\
q_{12}^{1/2}-q_{12}^{-1/2}\\
q_{123}^{1/2}-q_{123}^{-1/2}\\
q_{1234}^{1/2}-q_{1234}^{-1/2}\\
q_{12345}^{1/2}-q_{12345}^{-1/2}
\end{pmatrix}, 
$$
\begin{equation*}
\begin{split}
&d_\scL([U_1], [U_2], [U_3], [U_4], [U_0^\lor])
=
([U_1^\lor], [U_2^\lor], [U_3^\lor], [U_4^\lor], [C_1], [C_2])\\
&\times
\begin{pmatrix}
q_{12345}^{1/2}-q_{12345}^{-1/2}
&0
&0
&0
&-(q_{1}^{1/2}-q_{1}^{-1/2})\\ 
q_{125}^{1/2}-q_{125}^{-1/2}
&-(q_{15}^{1/2}-q_{15}^{-1/2})
&0
&q_{1345}^{1/2}-q_{1345}^{-1/2}
&-(q_{134}^{1/2}-q_{134}^{-1/2})\\ 
q_{1235}^{1/2}-q_{1235}^{-1/2}
&0
&-(q_{15}^{1/2}-q_{15}^{-1/2})
&q_{145}^{1/2}-q_{145}^{-1/2}
&-(q_{14}^{1/2}-q_{14}^{-1/2})\\ 
0
&0
&0
&q_{12345}^{1/2}-q_{12345}^{-1/2}
&-(q_{1234}^{1/2}-q_{1234}^{-1/2})\\ 
q_{12}^{1/2}-q_{12}^{-1/2}
&-(q_{1}^{1/2}-q_{1}^{-1/2})
&0
&0
&0\\ 
0
&0
&-(q_{5}^{1/2}-q_{5}^{-1/2})
&q_{45}^{1/2}-q_{45}^{-1/2}
&-(q_{4}^{1/2}-q_{4}^{-1/2}) 
\end{pmatrix}.
\end{split}
\end{equation*}
\end{example}

\begin{theorem}
\label{thm:twist}
Under the above notation, 
$(\bC[\ch^\bullet_\scF(\scA)], d_\scL)$ is a cochain complex and 
$$
H^k(\bC[\ch^\bullet_\scF(\scA)], d_\scL)
\simeq
H^k(M(\scA), \scL). 
$$
\end{theorem}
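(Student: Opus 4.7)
The plan is to exhibit $(\bC[\ch^\bullet_\scF(\scA)], d_\scL)$ as the twisted cellular cochain complex of a minimal CW model of $\sfM(\scA)$. Once this identification is made, both claims of the theorem --- that $d_\scL^2 = 0$ and that the cohomology is $H^*(\sfM(\scA),\scL)$ --- follow from the standard theory of cellular cohomology with local coefficients.

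First, invoke the minimal CW structure constructed in \cite{yos-lef, yos-str}: by an iterated Lefschetz-type argument along the generic flag $\scF$, $\sfM(\scA)$ is homotopy equivalent to a CW complex with exactly one $k$-cell $e_C$ for each chamber $C \in \ch^k_\scF(\scA)$. Since $|\ch^k_\scF(\scA)| = b_k(\sfM(\scA))$, the cellular cochain complex with trivial coefficients has zero differentials (this is the meaning of \emph{minimal}). For a rank-one local system $\scL$, however, the twisted differentials are generally nonzero and are determined entirely by the monodromy around the attaching maps of the cells.

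The second step is to extract the explicit formula involving $\Delta(C,C')$. The attaching $(k-1)$-sphere of $e_C$ lies over the real arrangement and can be decomposed, thanks to the real-figure/Morse-theoretic construction, into two arcs relating $C$ to an adjacent chamber $U$ of one lower index. Lifting this sphere to the universal cover of $\sfM(\scA)$ and comparing the two endpoints produces, after $\scL$-twisting, a monodromy contribution of the form $\prod_{H_i \in \Sep(U,C)} q_i^{1/2} - \prod_{H_i \in \Sep(U,C)} q_i^{-1/2}$: the symmetric $q_i^{\pm 1/2}$ normalization comes from splitting the attaching circle into its two halves relative to the chosen coordinates, and the choice of square roots amounts to choosing a base lift of the attaching map. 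The case distinction in $d_\scL([U_p])$ according to the signs of $\alpha_p(C)$ and $\alpha_{p+1}(C)$ records which side of the bounding lines the cell $e_C$ is attached on, and the relative signs come from the induced orientations.

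The main obstacle is precisely this last step: making the attaching maps explicit enough to read off the clean $\Delta(C,C')$ formula with correct signs. A convenient alternative route is to start from the Salvetti complex, whose twisted cellular differential is classical and manifestly squares to zero, and then to apply a discrete Morse function whose critical cells are indexed exactly by $\ch^\bullet_\scF(\scA)$ (such a Morse function is essentially what underlies \cite{yos-lef}). The twisted differential on the resulting minimal complex is obtained by summing monodromy contributions along the gradient paths connecting critical cells; once one verifies that each such sum collapses to $\Delta(C,C')$ for the expected pairs, the identity $d_\scL^2 = 0$ and the isomorphism with $H^*(\sfM(\scA), \scL)$ are inherited from the Salvetti model.
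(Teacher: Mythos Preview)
The paper does not give a proof of this theorem at all: immediately after the statement it simply writes ``See \cite{yos-lef, yos-ch, yos-loc} for details.'' So your proposal is not competing with an argument in the paper, but rather sketching what those external references contain.

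At that level your outline is accurate: the cited papers do build a minimal CW model of $\sfM(\scA)$ via a Lefschetz-type argument along the generic flag $\scF$, with cells indexed by $\ch^k_\scF(\scA)$, and then compute the twisted cellular differential explicitly to obtain the $\Delta(C,C')$ formula. Your alternative suggestion via the Salvetti complex plus a discrete Morse collapse is also a legitimate route (and is closer in spirit to work of Salvetti--Settepanella and Delucchi), though it is not the one actually carried out in \cite{yos-lef, yos-ch, yos-loc}. Where your sketch is thinnest is exactly where you say it is: extracting the precise formula for $d_\scL$ with the correct signs and the specific $\alpha_p/\alpha_{p+1}$ case split requires the detailed description of the attaching maps (or, in the Morse picture, the explicit enumeration of gradient paths), and this is the substantive content of the cited references rather than something that can be recovered from the general shape of the argument. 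If you want your write-up to stand on its own, that is the step you would need to import or reproduce in detail; otherwise, citing \cite{yos-lef, yos-ch, yos-loc} as the paper does is the honest option.
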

See \cite{yos-lef, yos-ch, yos-loc} for details.

\section{Resonant band algorithm}
\label{sec:main}


Let $\A=\{H_1, \dots, H_n\}$ be an arrangement of affine lines 
in $\R^2$, and let $\scF$ be a generic flag as in \S\ref{sec:twisted}. 

Fix an integer $k>1$ with $k|(n+1)$, and set 
$\lambda=e^{2\pi i/k}$. In this section, we will give 
an algorithm for computing the $\lambda$-eigenspace 
$H^1(F_\A)_\lambda$ of the first cohomology of a Milnor fiber. 

\subsection{Resonant bands and standing waves}
\label{sec:resban}

\begin{definition}
\label{def:band}
A {\em band} $B$ is a region bounded by a pair of consecutive 
parallel lines $H_i$ and $H_{i+1}$. 
\end{definition}
Each band $B$ includes two unbounded chambers 
$U_1(B), U_2(B)\in\ch(\A)$. By definition, 
$U_1(B)$ and $U_2(B)$ are opposite each other, 
$U_1(B)^\lor=U_2(B)$ and $U_2(B)^\lor=U_1(B)$. 

Define the adjacency distance $d(C, C')$ between 
two chambers $C$ and $C'$ to be the number of 
lines $H\in\A$ that separate $C$ and $C'$, that is, 
$$
d(C, C')=|\Sep(C, C')|. 
$$
The distance $d(U_1(B), U_2(B))$ is called the length 
of the band $B$. 
\begin{remark}
\label{rem:mult}
Let $\overline{B}$ be the closure of $B$ in 
the real projective plane $\R\bP^2$. $\overline{B}$ 
intersects $H_\infty$ in one point, 
$\overline{B}\cap H_\infty$. Each line $H\in\A\cup\{H_\infty\}$ 
either passes $\overline{B}\cap H_\infty$ or 
separates $U_1(B)$ and $U_2(B)$. Therefore 
the length of $B$ is equal to 
$n+1-\mult(\overline{B}\cap H_\infty)$. 
\end{remark}

\begin{definition}
A band $B$ is called {\em $k$-resonant} if the length of $B$ 
is divisible by $k$. We denote the set of all $k$-resonant bands 
by $\RB_k(\A)$. 
\end{definition}
To a $k$-resonant band $B\in\RB_k(\A)$, we can associate 
a {\em standing wave} $\nabla(B)\in\bC[\ch(\A)]$ on the band $B$ 
as follows: 
\begin{equation}
\label{eq:wave}
\begin{split}
\nabla(B)&=
\sum_{\substack{C\in\ch(\A),\\ C\subset B}}
\left(
e^{\frac{\pi i d(U_1(B), C)}{k}}-
e^{-\frac{\pi i d(U_1(B), C)}{k}}
\right)\cdot[C]\\
&=
\sum_{\substack{C\in\ch(\A), \\C\subset B}}
\left(
\lambda^{\frac{d(U_1(B), C)}{2}}-
\lambda^{-\frac{d(U_1(B), C)}{2}}
\right)\cdot[C]\\
\\
&=
2i\cdot
\sum_{\substack{C\in\ch(\A), \\C\subset B}}
\sin\left(\frac{\pi d(U_1(B), C)}{k}\right)\cdot[C]. 
\end{split}
\end{equation}

\begin{remark}
Since the length $d(U_1(B), U_2(B))$ of the band $B$ is divisible by $k$, 
the coefficients of $[U_1(B)]$ and $[U_2(B)]$ in the linear combination 
in (\ref{eq:wave}) are zero. Hence the chambers in the summations 
in (\ref{eq:wave}) run only over bounded chambers contained in $B$. 
We also note that exchanging of $U_1(B)$ and $U_2(B)$ affects at most the 
sign of $\nabla(B)$. 
\end{remark}

\begin{remark}
To indicate the choice of $U_1(B)$ and $U_2(B)$, we always 
put the name $B$ of the band in the unbounded chamber 
$U_1(B)$ (see Figure \ref{fig:A3}). 
\end{remark}

\subsection{Eigenspaces via resonant bands}
\label{sec:eigensp}

The map $B\longmapsto\nabla(B)$ can be naturally extended to the linear map 
\begin{equation}
\label{eq:nabla}
\nabla:
\bC[\RB_k(\A)]\longrightarrow
\bC[\ch(\A)]. 
\end{equation}

\begin{theorem}
\label{thm:main}
The kernel of $\nabla$ is isomorphic to the $\lambda$-eigenspace 
of the Milnor fiber monodromy, that is, 
$$
\Ker\left(\nabla:
\bC[\RB_k(\A)]\longrightarrow
\bC[\ch(\A)]\right)\simeq 
H^1(F_\A)_\lambda.
$$
In particular, $\dim H^1(F_\A)_\lambda$ is equal to the number of 
linear relations among the standing waves 
$\nabla(B)$, $B\in\RB_k(\A)$. 
\end{theorem}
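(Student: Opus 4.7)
The plan is to combine the isomorphism $(\ref{eq:eigen})$, applied to $\scL = \scL_\lambda$ (the local system with $q_i = \lambda$ for all $i$), with Theorem $\ref{thm:twist}$, so that
$$H^1(F_\A)_\lambda \;\simeq\; H^1\bigl(\bC[\ch^\bullet_\scF(\A)], d_{\scL_\lambda}\bigr) \;=\; \ker d^1 / \image d^0.$$
For this choice of local system the weights in Definition $\ref{def:sep}$ collapse to
$\Delta(C, C') = \lambda^{s/2} - \lambda^{-s/2} = 2i\sin(\pi s/k)$ with $s = d(C,C')$, which vanishes precisely when $k \mid d(C,C')$. The whole theorem then reduces to an explicit identification of $\ker d^1/\image d^0$ with $\ker \nabla$.

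First I would split the target $\bC[\ch^2_\scF(\A)]$ according to the partition $\ch^2_\scF(\A) = \ch^{2,\mathrm{bdd}} \sqcup \{U_1^\lor, \dots, U_{n-1}^\lor\}$ and write $d^1 = (d^1_{\mathrm{bdd}}, d^1_{\mathrm{unbdd}})$ accordingly. By the remark following $(\ref{eq:wave})$ each standing wave lies entirely in $\bC[\ch^{2,\mathrm{bdd}}]$, so it is natural to seek, for each $B \in \RB_k(\A)$, a canonical lift $\Psi(B) \in \bC[\ch^1_\scF(\A)]$ with
$$d^1_{\mathrm{unbdd}}\bigl(\Psi(B)\bigr) = 0, \qquad d^1_{\mathrm{bdd}}\bigl(\Psi(B)\bigr) = \nabla(B)$$
(up to a universal nonzero scalar). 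The natural candidate for $\Psi(B)$ is a telescoping sum running over those chambers $U_p \in \ch^1_\scF(\A)$ whose defining $\scF^1$-interval $(a_p, a_{p+1})$ lies inside the $\scF^1$-trace of $B$, weighted by factors of the form $\pm(\lambda^{j/2}-\lambda^{-j/2})$ read off from how the bounding lines of $B$ interact with the numbering of $\A$; the resonance condition $k \mid \mathrm{length}(B)$ is exactly what forces the two endpoint contributions at $U_1(B)$ and $U_2(B)$ to vanish, paralleling the same remark after $(\ref{eq:wave})$. Once this lift exists, the assignment $B \mapsto \Psi(B)$ intertwines $\nabla$ with $d^1$ and sends $\ker \nabla$ onto $\ker d^1 / \image d^0$.

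The main obstacle is constructing $\Psi$ and verifying the cancellation $d^1_{\mathrm{unbdd}}(\Psi(B))=0$ at each unbounded $2$-chamber $U_q^\lor$. Concretely this is a local sign-tracking computation: one must show that at every $U_q^\lor$ the two terms of $d^1(\Psi(B))$ coming from the chambers of $\ch^1$ on either side of the separating line cancel, and this cancellation ultimately rests on the identity $\lambda^{\mathrm{length}(B)} = 1$. Dually, one must check that every element of $\ker d^1_{\mathrm{unbdd}}/\image d^0$ is in the image of $\Psi$, which I would establish by a dimension count: $\image d^0$ is at most one-dimensional (spanned by $d^0[U_0]$, which is nonzero for $k \geq 2$ since $\Delta(U_0,U_0^\lor)\neq 0$), and the relations imposed by $d^1_{\mathrm{unbdd}}$ cut $\bC[\ch^1_\scF(\A)]$ down to exactly $\bC[\RB_k(\A)]$ modulo that one-dimensional piece. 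Assembling these ingredients yields the isomorphism $\ker \nabla \simeq H^1(F_\A)_\lambda$ claimed by the theorem.
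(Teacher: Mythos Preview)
Your overall strategy coincides with the paper's: use (\ref{eq:eigen}) and Theorem~\ref{thm:twist} to identify $H^1(F_\A)_\lambda$ with $\ker d_\scL/\Image d_\scL$ for the local system with all $q_i=\lambda$, observe that $\Delta(C,C')$ depends only on $d(C,C')$, and use the unbounded chambers $U_p^\lor\in\ch^2_\scF(\A)$ to cut $\ker d_\scL$ down to $\ker\nabla$.

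The misstep is in your description of the lift $\Psi$. A band $B$ is bounded by two \emph{consecutive} lines $H_i$ and $H_{i+1}$ (Definition~\ref{def:band}), so its $\scF^1$-trace is the \emph{single} interval $(a_i,a_{i+1})$, and exactly one chamber of $\ch^1_\scF(\A)$ meets it, namely $U_i$. There is no telescoping sum and no weights: the correct lift is just $\Psi(B_i)=[U_i]$. Your ``cancellation at $U_q^\lor$'' picture is accordingly off. Because $H_i\parallel H_{i+1}$, every chamber appearing in $d_\scL([U_i])$ lies in the band $B_i$, so the only unbounded contribution is the \emph{single} term $-\Delta(U_i,U_i^\lor)[U_i^\lor]$, which vanishes by $k$-resonance; there is no pair of terms to cancel. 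With this correction, the paper's proof is exactly your dimension count made explicit: one works in the complement $V=\bigoplus_{p=1}^{n-1}\bC[U_p]$ of $\Image d_\scL$ and checks, for each $p$, that the coefficient of $[U_p^\lor]$ in $d_\scL(\varphi)$ receives a contribution only from $[U_p]$. This forces $c_p=0$ unless $H_p\parallel H_{p+1}$ and the resulting band is $k$-resonant; on the remaining span, $d_\scL$ and $\nabla$ agree on the nose. Once you replace the telescoping-sum description by $\Psi(B_i)=[U_i]$, your outline becomes the paper's argument verbatim.
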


\begin{proof}
Let $\scL_\lambda$ be the rank-one local system on $\sfM(\A)$ 
defined by $q_1=\cdots=q_n=\lambda\in\bC^*$ 
(see \S\ref{sec:milnor} and \S\ref{sec:twisted}). In this case, 
$\Delta(C, C')$ depends only on the adjacency distance 
$d(C, C')$, or more precisely, 
$$
\Delta(C, C')=
\lambda^{\frac{d(C, C')}{2}}-
\lambda^{-\frac{d(C, C')}{2}}. 
$$
Now, we consider the first cohomology group 
$H^1(\bC[\ch^\bullet_\scF(\A)], d_\scL)$ of the twisted minimal 
cochain complex. The image $d_\scL:\bC[\ch^0_\scF(\A)]
\longrightarrow \bC[\ch^1_\scF(\A)]$ is generated by 
$$
d_\scL([U_0])=
\sum_{p=1}^{n-1}
(
\lambda^{\frac{p}{2}}-
\lambda^{-\frac{p}{2}}
)[U_p] 
+
\left(
\lambda^{\frac{n}{2}}-
\lambda^{-\frac{n}{2}}
\right)[U_0^\lor]. 
$$
Since $\lambda=e^{2\pi i/k}$ with $k>1$ and $k|(n+1)$, we have 
$\lambda^{\frac{n}{2}}-\lambda^{-\frac{n}{2}}=
\lambda^{-\frac{n}{2}}(\lambda^n-1)\neq 0$. Thus 
the coefficient of $[U_0^\lor]$ in $d_\scL([U_0])$ is non-zero. 
Define the subspace $V$ of $\bC[\ch^1_\scF(\A)]$ by 
\begin{equation}
\begin{split}
V&=\bigoplus_{p=1}^{n-1}\bC\cdot[U_p]\\
(&\simeq \Coker
\left(d_\scL:\bC[\ch^0_\scF(\A)]
\longrightarrow \bC[\ch^1_\scF(\A)]\right)). 
\end{split}
\end{equation}
Then $H^1(\bC[\ch^\bullet_\scF(\A)], d_\scL)$ is isomorphic to 
$\Ker\left(d_\scL|_V:V\longrightarrow\bC[\ch^2_\scF(\A)]\right)$. 
It is sufficient to show that 
$\Ker (d_\scL|_V)\simeq \Ker\nabla$, which will be done in several steps. 
Suppose that 
$\varphi=\sum_{p=1}^{n-1}c_p\cdot[U_p]\in \Ker(d_\scL|_V)$. 

\begin{itemize}
\item[(i)] 
If $H_i$ and $H_{i+1}$ are not parallel, then $c_i=0$. 
\end{itemize}
Note that if $j\neq i$, then the chamber $[U_i^\lor]$ does not appear 
in $d_\scL([U_{j}])$. Thus the coefficient of $[U_i^\lor]$ in 
$$
d_\scL(\varphi)=
\sum_{p=1}^{n-1}c_p\cdot d_\scL([U_p])
$$
is $c_i\cdot\Delta(U_i, U_i^\lor)=
c_i(\lambda^{\frac{n}{2}}-\lambda^{-\frac{n}{2}})$. This equals zero if and only if 
$c_i=0$. 

Now we may assume that 
$\varphi=\sum_{p}c_p\cdot[U_p]\in\Ker(d_\scL)$ is a linear combination 
of $[U_p]$s such that $H_p$ and $H_{p+1}$ are parallel. 
Suppose that $H_i$ and $H_{i+1}$ are parallel and denote by  
$B_i$ the band determined by these lines. 
\begin{itemize}
\item[(ii)] 
If $B_i$ is not $k$-resonant, then $c_i=0$. 
\end{itemize}
In this case, $\Delta(U_i, U_i^\lor)=
\lambda^{\frac{d(U_i, U_i^\lor)}{2}}-
\lambda^{-\frac{d(U_i, U_i^\lor)}{2}}$. By the assumption that 
$d(U_i, U_i^\lor)$ is not divisible by $k$, we have 
$\Delta(U_i, U_i^\lor)\neq 0$. Since 
$\varphi$ is a linear combination of $[U_p]$s with 
parallel boundaries $H_p$ and $H_{p+1}$, the term $[U_i^\lor]$ 
appears only in $d_\scL([U_i])$, which is equal to 
$c_i\cdot\Delta(U_i, U_i^\lor)[U_i^\lor]$. Therefore $c_i=0$. 

Finally we may assume that $\varphi$ is a linear combination of 
$[U_p]$s such that the boundaries $H_p$ and $H_{p+1}$ are parallel and 
the length of the corresponding band $B_p$ is divisible by $k$. In this case, 
it is straightforward to check that the maps $d_\scL$ and $\nabla$ are 
identical. This completes the proof. 
\end{proof}

\begin{example}
\label{ex:A(6,1)}
($A_3$-arrangement, $\A(6,1)$ or $\scB_6$) 
The three arrangements in Figure \ref{fig:A3} are projectively 
equivalent, and are respectively called $A_3$-arrangement, 
$\A(6,1)$ or $\scB_6$. (See \S \ref{sec:exconj} for the latter 
two notations.) 
We use the left figure to compute $\dim H^1(F_\A)_\lambda$. (The 
symbol $\infty$ indicates that the line at infinity is an element 
of $\A$.) Since $|c\A|=n+1=6$, $k\in\{2,3,6\}$ and we have 
$\RB_2(\A)=\RB_6(\A)=
\emptyset$, $\RB_3(\A)=\{B_1, B_2\}$. By definition, we have 
\begin{equation*}
\begin{split}
\nabla(B_1)&=\sqrt{-3}\cdot [C_1]+\sqrt{-3}\cdot [C_2]\\
\nabla(B_2)&=\sqrt{-3}\cdot [C_1]+\sqrt{-3}\cdot [C_2]. 
\end{split}
\end{equation*}
Hence we have a linear relation $\nabla(B_1-B_2)=0$ and 
$\dim H^1(F_\A)_\lambda=1$ for $\lambda=e^{2\pi i/3}$. 
(Hence the $A_3$-arrangement is pure-tone; see Definition 
\ref{def:pure-tone}.) 
\begin{figure}[htbp]
\begin{picture}(400,150)(20,0)
\thicklines

\multiput(55,0)(40,0){2}{\line(0,1){150}}
\multiput(0,55)(00,40){2}{\line(1,0){150}}
\put(0,0){\line(1,1){150}}
\put(0,140){\huge $\infty$}

\put(70,140){\large $B_1$}
\put(58,105){\color{blue}$U_1(B_1)$\normalcolor}
\put(58,35){\color{blue}$U_2(B_1)$\normalcolor}

\put(-10,70){\large $B_2$}
\put(15,70){\color{blue}$U_1(B_2)$\normalcolor}
\put(110,70){\color{blue}$U_2(B_2)$\normalcolor}

\put(60,80){\color{blue}$C_1$\normalcolor}
\put(80,65){\color{blue}$C_2$\normalcolor}

\put(170,40){\line(1,0){150}}
\qbezier(180,22.68)(180,22.68)(253.51,150)
\qbezier(310,22.68)(310,22.68)(236.49,150)
\put(245,150){\line(0,-1){130}}
\qbezier(175,31.34)(175,31.34)(300,103.51)
\qbezier(315,31.34)(315,31.34)(190,103.51)

\multiput(330,70)(0,40){2}{\line(1,0){80}}
\multiput(350,50)(40,0){2}{\line(0,1){80}}
\put(330,50){\line(1,1){80}}
\put(330,130){\line(1,-1){80}}

\end{picture}
      \caption{The $A_3$-arrangement ($=\A(6,1)=\mathcal{B}_6$)}
\label{fig:A3}
\end{figure}
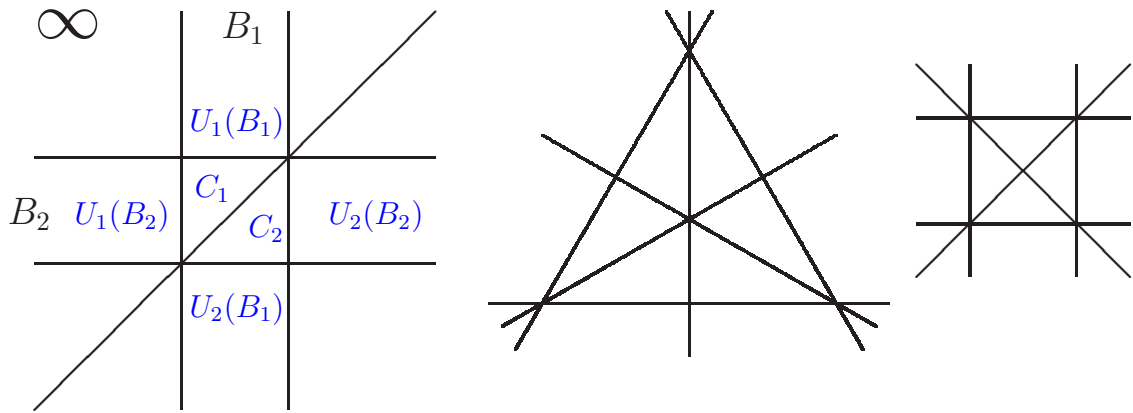
\end{example}

\begin{example}
\label{ex:A(12,2)}
($\A(12,2)$ from \cite{gru-simp}) 
Let $\A$ be the line arrangement in Figure 
\ref{fig:A(12,2)} (together with the line at infinity). 
Then $|c\A|=n+1=12$. There are seven bands, $B_1, \dots, B_7$. 
Among them, $B_5, B_6$ and $B_7$ have length $7$ which is 
coprime with $12$ so we can ignore them. We have 
$\RB_3(\A)=\{B_1, B_4\}$ and 
$\RB_2(\A)=\RB_4(\A)=\{B_2, B_3\}$. First consider the 
case $k=3$. Then 
\begin{equation*}
\begin{split}
\nabla(B_1)&=\sqrt{-3}\cdot [C_1]+\dots, \\
\nabla(B_4)&=\sqrt{-3}\cdot [C_6]+\dots. 
\end{split}
\end{equation*}
Since the chamber $C_6$ is not contained in the band $B_1$, 
it does not appear in the linear combination for $\nabla(B_1)$. 
Hence $\nabla(B_1)$ and $\nabla(B_4)$ are linearly 
independent. We conclude that 
$H^1(F_\A)_\lambda=0$ for $\lambda=
e^{2\pi i/3}$. The cases $k=2$ and $k=4$ are similar. More precisely, 
since $B_2, B_3\in\RB_2(\A)=\RB_4(\A)$ are parallel and they do not overlap, 
$\nabla(B_2)$ and $\nabla(B_3)$ are linearly independent. 
Consequently 
we have $H^1(F_\A)_{\neq 1}=0$ and so the cohomology does not 
have non-trivial eigenvalues. 
\begin{figure}[htbp]
\begin{picture}(400,150)(20,0)
\thicklines

\multiput(140,0)(40,0){4}{\line(0,1){150}}
\put(135,-8){\footnotesize $H_1$}
\multiput(100,45)(0,30){3}{\line(1,0){200}}
\put(100,15){\line(4,3){180}}
\put(120,0){\line(4,3){180}}
\put(100,135){\line(4,-3){180}}
\put(120,150){\line(4,-3){180}}
\put(295,115){\huge $\infty$}

\put(111,133){$B_1$}
\put(111,85){$B_2$}
\put(111,55){$B_3$}
\put(111,7){$B_4$}
\put(155,0){$B_5$}
\put(195,0){$B_6$}
\put(235,0){$B_7$}

\put(148,112){\small $C_1$}
\put(164,92){\small $C_2$}
\put(148,82){\small $C_3$}
\put(148,63){\small $C_4$}
\put(164,50){\small $C_5$}
\put(148,33){\small $C_6$}

\end{picture}
      \caption{$\A(12,2)$}
\label{fig:A(12,2)}
\end{figure}
\end{example}



The argument used in Example \ref{ex:A(12,2)} is 
generalized in the next 
section. See \S\ref{sec:exconj} for 
further examples.

\begin{remark}
\label{rem:change}
The cohomology of the Milnor fiber 
$H^1(F_\A)$ depends only on the projective arrangement 
$\A\cup\{H_\infty\}$. The change of the line at infinity 
$H_\infty$ sometimes makes the structure of resonant bands 
$\RB_k$ simpler. This fact will be used in Corollary 
\ref{cor:config}. 
\end{remark}

\subsection{Vanishing}
\label{sec:vanish}

Fix $k$ and $\lambda$ as above. 
We describe some corollaries to Theorem \ref{thm:main}. 

\begin{corollary}
\label{cor:empty}
If $\RB_k(\A)=\emptyset$, then $H^1(F_\A)_\lambda=0$. 
\end{corollary}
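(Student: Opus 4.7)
The plan is to apply Theorem \ref{thm:main} directly, since the corollary is an immediate degeneration of that result. By Theorem \ref{thm:main}, we have the isomorphism
\[
H^1(F_\A)_\lambda \simeq \Ker\left(\nabla: \bC[\RB_k(\A)] \longrightarrow \bC[\ch(\A)]\right).
\]
If $\RB_k(\A) = \emptyset$, then the source $\bC[\RB_k(\A)]$ is the zero vector space, so $\nabla$ is the zero map from $0$ to $\bC[\ch(\A)]$, whose kernel is $0$. Hence $H^1(F_\A)_\lambda = 0$.

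There is essentially no obstacle: the corollary is just the extreme case of Theorem \ref{thm:main} in which the space indexing the standing waves is empty. The only thing worth remarking in the write-up is why emptiness of $\RB_k(\A)$ is a genuinely checkable combinatorial condition, namely that by Remark \ref{rem:mult} the length of a band $B$ equals $n+1 - \mult(\overline{B}\cap H_\infty)$, so $k$-resonance can be read off from the multiplicities at infinity of the parallelism classes in $\A$. This gives a practical criterion for applying the corollary, as illustrated in Example \ref{ex:A(12,2)} where most bands were ruled out because their lengths were coprime to the relevant $k$.
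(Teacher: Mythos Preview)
Your proof is correct and is essentially identical to the paper's own argument: both simply note that $\bC[\RB_k(\A)]=0$ forces $\Ker\nabla=0$ and then invoke Theorem~\ref{thm:main}. The additional remark about reading off $k$-resonance from multiplicities at infinity is fine context but is not part of the paper's proof of this corollary (it appears separately as Proposition~\ref{prop:multi}).
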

\begin{proof}
Since $\bC[\RB_k(\A)]=0$, obviously 
$\Ker(\nabla:\bC[\RB_k(\A)]\rightarrow\bC[\ch(\A)])=0$. 
By Theorem \ref{thm:main}, $H^1(F_\A)_\lambda=0$. 
\end{proof}
Using the interpretations in Remark \ref{rem:mult}, we have 
the following. 
\begin{proposition}
\label{prop:multi}
A band $B$ is $k$-resonant if and only if $\mult(\overline{B}\cap H_\infty)$ 
is divisible by $k$. 
\end{proposition}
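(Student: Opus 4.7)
The plan is to deduce the proposition directly from Remark \ref{rem:mult}, together with the standing hypothesis $k \mid (n+1)$ fixed at the beginning of \S\ref{sec:main}.

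First I would unpack the definitions. By definition, $B$ is $k$-resonant exactly when the length $d(U_1(B), U_2(B))$ is divisible by $k$. On the other hand, Remark \ref{rem:mult} identifies this length with $n + 1 - \mult(\overline{B} \cap H_\infty)$: every line in $\A \cup \{H_\infty\}$ either passes through the projective point $\overline{B}\cap H_\infty$ or separates the two opposite unbounded chambers of $B$, and these two possibilities are mutually exclusive.

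Combining these two facts gives the congruence
\[
d(U_1(B), U_2(B)) \equiv -\mult(\overline{B}\cap H_\infty) \pmod{k},
\]
since $n+1 \equiv 0 \pmod{k}$ by assumption. Hence $k$ divides the length of $B$ if and only if $k$ divides $\mult(\overline{B}\cap H_\infty)$, which is the claimed equivalence.

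There is essentially no obstacle here; the content of the proposition is purely bookkeeping, and the only subtle point is invoking the global assumption $k \mid (n+1)$ (which is what converts a statement about the length of $B$ into a statement about the multiplicity at $\overline{B}\cap H_\infty$). I would make sure to state this hypothesis explicitly in the proof so the reader sees where the divisibility transfers.
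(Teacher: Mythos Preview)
Your proof is correct and matches the paper's approach exactly: the paper states the proposition immediately after the sentence ``Using the interpretations in Remark~\ref{rem:mult}, we have the following'' and gives no further argument, so your write-up simply makes explicit the one-line deduction (length $= n+1 - \mult(\overline{B}\cap H_\infty)$ together with $k\mid n+1$) that the paper leaves to the reader.
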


\begin{corollary}
\label{cor:lib}
Suppose that there are no points on $H_\infty$ where the 
multiplicity of 
$c\A=\A\cup\{H_\infty\}$ is divisible by $k$. Then 
$H^1(F_\A)_\lambda=0$. 
\end{corollary}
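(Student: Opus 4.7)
The plan is to assemble Corollary \ref{cor:lib} as an essentially immediate consequence of the two preceding results, Proposition \ref{prop:multi} and Corollary \ref{cor:empty}. The geometric content to exploit is that every band in $\R^2$ collapses at infinity to a single point, and the multiplicity of the projective arrangement at that point records exactly the length datum that controls $k$-resonance.

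More precisely, I would begin by picking an arbitrary band $B$ of $\A$, bounded by a pair of consecutive parallel lines $H_i, H_{i+1} \in \A$. Taking closures in $\R\bP^2$, both $H_i$ and $H_{i+1}$ meet $H_\infty$ at a common point $p_B := \overline{B} \cap H_\infty$, and of course $H_\infty$ itself passes through $p_B$. So $p_B$ is a point on $H_\infty$ in the sense of the hypothesis of the corollary.

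Next I would invoke Proposition \ref{prop:multi}: $B$ is $k$-resonant if and only if $\mult(p_B)$ in $c\A = \A \cup \{H_\infty\}$ is divisible by $k$. By the standing hypothesis, no point on $H_\infty$ has multiplicity divisible by $k$, so in particular $k \nmid \mult(p_B)$, forcing $B$ to be non-resonant. Since $B$ was arbitrary, $\RB_k(\A) = \emptyset$.

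Finally, Corollary \ref{cor:empty} takes over and yields $H^1(F_\A)_\lambda = 0$, completing the argument. There is no real obstacle here: the only thing to verify carefully is that every band of $\A$ indeed corresponds to a well-defined point on $H_\infty$ with multiplicity given by the number of lines of $c\A$ through it, which is immediate from the definition of a band as the region between two parallel lines and the convention that $H_\infty$ belongs to $c\A$.
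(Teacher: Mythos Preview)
Your proposal is correct and follows essentially the same route as the paper: use Proposition~\ref{prop:multi} to conclude $\RB_k(\A)=\emptyset$ from the multiplicity hypothesis on $H_\infty$, then apply Corollary~\ref{cor:empty}. The paper's proof is just a terser version of exactly this argument.
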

\begin{proof}
By Proposition \ref{prop:multi}, the assumption is equivalent 
to $\RB_k(\A)=\emptyset$. 
We then use Corollary \ref{cor:empty}. 
\end{proof}

\begin{remark}
Corollary \ref{cor:lib} 
is proved by Libgober \cite[Corollary 3.5]{lib-mil} for 
more general complex arrangement cases. 
\end{remark}

For the real case, we obtain a stronger result as follows. 

\begin{theorem}
\label{thm:1pt}
Suppose that all $k$-resonant bands are parallel to each other. 
Then $H^1(F_\A)_\lambda=0$. 
\end{theorem}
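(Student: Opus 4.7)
The plan is to invoke Theorem \ref{thm:main} and show that the standing-wave map $\nabla \colon \bC[\RB_k(\A)]\to\bC[\ch(\A)]$ is injective under the parallelism hypothesis. Let $B_1,\dots,B_m$ denote the $k$-resonant bands, all of which, by assumption, have a common direction.

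The key observation is a disjointness of supports. Each $B_i$ is an open strip between two consecutive parallel lines in the shared direction; two distinct strips in the same parallel class are separated by one of those parallel lines, so their interiors are disjoint. Consequently, a chamber $C\in\ch(\A)$ can be contained in at most one $B_i$. Since by definition $\nabla(B_i)$ is a linear combination of symbols $[C]$ with $C\subset B_i$, the elements $\nabla(B_1),\dots,\nabla(B_m)$ have pairwise disjoint supports in the basis $\ch(\A)$ of $\bC[\ch(\A)]$.

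Next I would verify that each $\nabla(B_i)$ is nonzero. Since $B_i$ is $k$-resonant its length is a positive multiple of $k$, hence at least $k\ge 2$. In particular there is at least one line of $\A$ that crosses the strip $B_i$ without being $U_2(B_i)$'s last separator, so some chamber $C\subset B_i$ satisfies $d(U_1(B_i),C)=1$. The corresponding coefficient in \eqref{eq:wave} equals $\lambda^{1/2}-\lambda^{-1/2}$, which is nonzero because $\lambda=e^{2\pi i/k}\neq 1$. Therefore $\nabla(B_i)\neq 0$.

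Combining the two points, any relation $\sum_{i=1}^m c_i\nabla(B_i)=0$ forces $c_i\nabla(B_i)=0$, and hence $c_i=0$, for every $i$; that is, $\nabla$ is injective. By Theorem \ref{thm:main} we conclude $H^1(F_\A)_\lambda\simeq\Ker\nabla=0$. The only mildly delicate step is the non-vanishing of $\nabla(B_i)$; everything else is bookkeeping about which chambers lie in which strip.
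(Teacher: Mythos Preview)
Your proposal is correct and follows essentially the same approach as the paper: both argue that parallel $k$-resonant bands have pairwise disjoint supports in $\bC[\ch(\A)]$, so the $\nabla(B_i)$ are linearly independent once each is known to be nonzero, whence $\Ker\nabla=0$ and Theorem~\ref{thm:main} gives the conclusion. The only difference is cosmetic: the paper handles the non-vanishing of $\nabla(B_i)$ by invoking the standing hypothesis that $\A$ has at least one intersection (so no band has length~$0$), whereas you spell this out by exhibiting a chamber $C\subset B_i$ with $d(U_1(B_i),C)=1$ and nonzero coefficient $\lambda^{1/2}-\lambda^{-1/2}$.
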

\begin{proof}
By the assumption, 
$\RB_k(\A)=\{B_1, \dots, B_m\}$ consists of parallel bands. 
Now, the supports of $\nabla(B_1), \dots, \nabla(B_m)$, that is, 
the set of chambers appearing in each standing wave, 
are mutually disjoint. They 
are obviously linearly independent. 
(Recall that, in this paper, 
we assume that the arrangement $\A$ has at least one 
intersection.) Hence $H^1(F_\A)_\lambda=0$. 
(See Example \ref{ex:A(12,2)}.) 
\end{proof}

\begin{corollary}
\label{cor:1pt}
Suppose that there is at most one point $p\in H_\infty$ 
such that the multiplicity of $c\A=\A\cup\{H_\infty\}$ at $p$ 
is divisible by $k$. Then $H^1(F_\A)_\lambda=0$. 
\end{corollary}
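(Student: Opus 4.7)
The plan is to reduce the corollary to the two vanishing results already established, namely Corollary \ref{cor:empty} and Theorem \ref{thm:1pt}, by unpacking the geometric meaning of ``$k$-resonant'' via Proposition \ref{prop:multi}.

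First I would split into two cases according to how many points $p \in H_\infty$ have $\mult_p(c\A)$ divisible by $k$. If there are zero such points, then Proposition \ref{prop:multi} tells us that no band $B$ satisfies the divisibility condition on $\mult(\overline{B} \cap H_\infty)$, so $\RB_k(\A) = \emptyset$, and Corollary \ref{cor:empty} applies directly.

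The interesting case is when there is exactly one such point $p \in H_\infty$. Here the key observation is that, again by Proposition \ref{prop:multi}, every $k$-resonant band $B$ must satisfy $\overline{B} \cap H_\infty = p$. But two bands in $\R^2$ whose projective closures meet $H_\infty$ at the same point are precisely two pairs of parallel lines sharing a common direction, that is, bands parallel to each other in the affine plane. Thus all elements of $\RB_k(\A)$ are mutually parallel, and Theorem \ref{thm:1pt} yields $H^1(F_\A)_\lambda = 0$.

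No step here looks genuinely hard: the content is entirely in Proposition \ref{prop:multi} (which translates the band length into a multiplicity at infinity) and in Theorem \ref{thm:1pt} (which handles parallel resonant bands by noting that their standing waves have disjoint supports). The only small point to verify carefully is the geometric claim that ``same point on $H_\infty$'' means ``parallel in $\R^2$'', but this is just the standard identification of points on the line at infinity with pencils of parallel lines.
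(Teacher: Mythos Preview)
Your proposal is correct and follows essentially the same route as the paper: invoke Proposition~\ref{prop:multi} to translate the hypothesis on multiplicities at $H_\infty$ into the statement that all $k$-resonant bands (if any) are parallel, then apply Theorem~\ref{thm:1pt}. The paper's proof does not bother to split off the zero-point case separately (since the hypothesis of Theorem~\ref{thm:1pt} is vacuously satisfied when $\RB_k(\A)=\emptyset$), but your case split is harmless and the argument is otherwise identical.
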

\begin{proof}
Again by Proposition \ref{prop:multi}, the assumption is equivalent 
to $\RB_k(\A)$ consisting of parallel bands. 
We then use Theorem \ref{thm:1pt}. 
\end{proof}
Let us denote by $H^1(F_\A)_{\neq 1}=
\bigoplus\limits_{\lambda\neq 1}H^1(F_\A)_\lambda$ the direct 
sum of non-trivial eigenspaces. The following is immediate from 
Corollary \ref{cor:1pt} and Remark \ref{rem:change}. 
\begin{corollary}
\label{cor:config}
\begin{itemize}
\item[$(1)$] 
Suppose that $H^1(F_\A)_\lambda\neq 0$. Then each line 
$H\in c\A=\A\cup\{H_\infty\}$ has at least two multiple points, such that 
the multiplicity is divisible by $k$.

\item[$(2)$] 
Suppose that $H^1(F_\A)_{\neq 1}\neq 0$. Then each line 
$H\in c\A=\A\cup\{H_\infty\}$ has at least two multiple points. 
\end{itemize}
\end{corollary}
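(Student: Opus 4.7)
The plan is to deduce both statements from the contrapositive of Theorem \ref{thm:1pt}, combined with Remark \ref{rem:change}: since $H^1(F_\A)$ depends only on the projective arrangement $c\A$, any line $H\in c\A$ is free to play the role of the line at infinity.

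For part (1), I would fix an arbitrary line $H\in c\A$ and pass to the affine arrangement $\A' = c\A\setminus\{H\}$ obtained by treating $H$ as $H_\infty$. Then $H^1(F_{\A'})_\lambda\cong H^1(F_\A)_\lambda\neq 0$, so the contrapositive of Theorem \ref{thm:1pt} produces two bands $B_1, B_2\in\RB_k(\A')$ that are not mutually parallel. Two bands are parallel if and only if their projective closures meet $H$ in a common point; therefore $p_1 := \overline{B_1}\cap H$ and $p_2 := \overline{B_2}\cap H$ are distinct points of $H$. By Proposition \ref{prop:multi}, $k$ divides $\mult(p_j)$ for $j=1,2$. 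Moreover, the very existence of a band at $p_j$ requires at least two parallel lines of $\A'$ concurrent at $p_j$, so $\mult(p_j)\geq 3$ in $c\A$; that is, $p_1$ and $p_2$ are multiple points of $c\A$ lying on $H$ whose multiplicity is divisible by $k$. Since $H\in c\A$ was arbitrary, this gives the assertion of part (1).

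Part (2) is then immediate: the hypothesis $H^1(F_\A)_{\neq 1}\neq 0$ furnishes a root of unity $\lambda = e^{2\pi i/k}$ with $k>1$ for which $H^1(F_\A)_\lambda\neq 0$, and part (1) yields at least two multiple points on every line of $c\A$.

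I do not anticipate a genuine obstacle here: the argument is little more than a line-by-line application of Theorem \ref{thm:1pt} as the line at infinity ranges over $c\A$. The one step that merits explicit comment is the passage from ``$\mult(p)$ divisible by $k$'' to ``$p$ is a multiple point'', which is automatic when $k\geq 3$ but, for $k=2$, relies on the observation that a band of $\A'$ exists only when two consecutive parallel lines are present, forcing $\mult(p)\geq 3$ at the corresponding point of $H$.
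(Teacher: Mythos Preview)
Your proposal is correct and follows essentially the same route as the paper: place an arbitrary line $H\in c\A$ at infinity and invoke the contrapositive of Theorem~\ref{thm:1pt} (the paper phrases this via Corollary~\ref{cor:1pt} and Remark~\ref{rem:change}). Your write-up is in fact more careful than the paper's two-line proof, since you explicitly verify that the two points obtained are \emph{multiple} points (multiplicity $\geq 3$)---a detail that matters when $k=2$ and that the paper leaves implicit in its passage through $k$-resonant bands.
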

\begin{proof}
Let $H$ be such a line. Choose an affine open set 
in such a way that $H$ is a line at infinity. 
\end{proof}

\begin{remark}
We do not know whether Corollary \ref{cor:config} holds for 
complex arrangements. We will prove a stronger result in 
\S\ref{sec:A3}. 
\end{remark}

\subsection{Upper-bound}
\label{subsec:upper}

Recall that two lines $H, H'$ in the real projective plane $\R\bP^2$ 
divide the space into two regions. 

\begin{definition}
\label{def:sharp}
Let $c\A$ be a line arrangement in the real projective plane $\R\bP^2$. 
Then the pair of lines $H_i, H_j\in c\A$ is said to be a 
{\em sharp pair} if all intersection points of $c\A\setminus 
\{H_i, H_j\}$ are contained in one of two regions or lie 
on $H_i\cup H_j$. (In other words, there are no intersection points 
in one of the two regions determined by $H_i$ and $H_j$.) 
\end{definition}

\begin{example}
A fiber-typer arrangement has a sharp pair of lines. 
\end{example}

\begin{example}
In the Pappus arrangement (Figure \ref{fig:pappus}), 
the line at infinity and the leftmost vertical line form a 
sharp pair. So do the two boundary lines of the band $B_1$. 
Furthermore, all line arrangements appearing in this 
paper contain sharp pairs of lines. (There also exist arrangements 
which have no sharp pairs.) 
\end{example}

\begin{theorem}
\label{thm:sharp}
Assume that the arrangement $c\A$ contains a sharp pair of lines. 
Then: 
\begin{itemize}
\item[(i)] 
$\dim H^1(F)_\lambda\leq 1$ for $\lambda\neq 1$. 
\item[(ii)] 
Suppose that the pair $H_1, H_2\in c\A$ is sharp. Let $p=H_1\cap H_2$ 
be the intersection. If the multiplicity of $c\A$ at $p$ is not 
divisible by $k$, then $H^1(F)_\lambda=0$ for 
$\lambda=e^{2\pi i/k}$. 
\end{itemize}
\end{theorem}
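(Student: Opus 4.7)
My plan is to apply Theorem~\ref{thm:main}, identifying $H^1(F_\A)_\lambda$ with $\ker(\nabla\colon\bC[\RB_k(\A)]\to\bC[\ch(\A)])$. By Remark~\ref{rem:change} I put $H_\infty=H_1$; the sharp pair condition then asserts that $H_2$ is an affine line with every intersection of $c\A\setminus\{H_1,H_2\}$ lying in one open half-plane bounded by $H_2$, say the ``upper'' one. Let $p=H_1\cap H_2\in H_\infty$ be the direction of $H_2$. As an immediate consequence of sharpness, every affine line parallel to $H_2$ other than $H_2$ itself lies strictly above $H_2$ (otherwise it would meet some non-parallel line below $H_2$).

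The key geometric lemma is: for every $k$-resonant band $B$ with $p_B\neq p$ (i.e.\ $H_2$ crosses $B$), the segment $B\cap H_2=(r_a,r_b)$ of $H_2$ inside the strip $B$ contains no other intersection point of $c\A$. Indeed, a line $L\in c\A$ different from $H_1,H_2$ and the two bounding lines of $B$, with $L\cap H_2=r_L\in(r_a,r_b)$, cannot be parallel to the bounding lines of $B$ (they are consecutive in their class), so by sharpness its intersections with the two bounding lines of $B$ lie strictly above $H_2$; but then the straight segment of $L$ inside the strip $B$ is entirely above $H_2$, contradicting that $(r_L,0)\in L\cap B$ sits on $H_2$. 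Therefore the chamber $C_B^+\subset B$ just above this single sub-edge $(r_a,r_b)$ of $H_2$ is uniquely determined. Its neighbor just below $H_2$ is the unbounded end, which I take as $U_1(B)=C_B^-$; thus $d(U_1(B),C_B^+)=1$ and the coefficient of $C_B^+$ in $\nabla(B)$ is $\lambda^{1/2}-\lambda^{-1/2}=2i\sin(\pi/k)\neq 0$. Moreover, distinct bands $B\neq B'$ with $p_B,p_{B'}\neq p$ give distinct chambers $C_B^+\neq C_{B'}^+$: even when the bounding lines share an endpoint on $H_2$ (a multi-point), their differing slopes cut off distinct chamber-wedges immediately above.

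For part (ii), $k\nmid\mathrm{mult}(p)$ forces bands in class $p$ to have length $n+1-\mathrm{mult}(p)\not\equiv 0\pmod k$ and hence be non-resonant, so every $B\in\RB_k(\A)$ admits such a $C_B^+$. The projection $\pi$ from $\bC[\ch(\A)]$ onto the subspace spanned by chambers adjacent to $H_2$ from above sends $\nabla(B)$ to $2i\sin(\pi/k)\cdot[C_B^+]$, a nonzero multiple of a distinct basis vector; hence $\nabla$ is injective and $H^1(F_\A)_\lambda=0$. For part (i), the nontrivial case is $p$ being $k$-resonant, which also contributes parallel class-$p$ bands $B^{(0)},B^{(1)},\ldots$ between consecutive parallels to $H_2$, all above $H_2$. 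I claim the $|\RB_k(\A)|-1$ vectors $\{\nabla(B):p_B\neq p\}\cup\{\nabla(B^{(j)}):j\geq 1\}$ (omitting only $B^{(0)}$) are linearly independent. In a putative relation $\sum_B c_B\nabla(B)+\sum_{j\geq 1}d_j\nabla(B^{(j)})=0$, applying $\pi$ kills the second sum (since every $B^{(j)}$ with $j\geq 1$ lies above the lowest parallel $b_1>0$ and has no chamber touching $H_2$), leaving $\sum_B c_B[C_B^+]=0$, which forces $c_B=0$ by distinctness; then $\sum_{j\geq 1}d_j\nabla(B^{(j)})=0$ forces $d_j=0$ by the pairwise disjointness of the horizontal strips $B^{(j)}$. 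Consequently $\mathrm{rank}(\nabla)\geq|\RB_k(\A)|-1$ and $\dim H^1(F_\A)_\lambda\leq 1$. The main obstacle is the clean proof of the geometric lemma, which must rule out every way a line could interact with the bounding lines of $B$ and with $H_2$ under sharpness; the rest of the argument is routine linear algebra using the marker chamber $C_B^+$ and the disjointness of parallel class-$p$ strips.
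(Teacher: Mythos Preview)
Your approach is essentially the paper's: place one line of the sharp pair in a special position (the paper keeps both affine with $H_1=\{x=0\}$, you send $H_1$ to infinity and use $H_2=\{y=0\}$), define a marker chamber $C_B$ for each band transverse to the special direction, and argue linear independence via these markers. Your geometric lemma is exactly what underlies the paper's unjustified assertion that ``$C_B$ is contained in the unique $k$-resonant band $B$'', and your treatment of part~(i) (omit $B^{(0)}$, project onto chambers touching $H_2$, then use disjointness of the parallel strips) is a cleaner packaging of the paper's ``$c_{\mathrm{left}}$ determines everything''.

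There is, however, a real gap in your distinctness claim for the markers $C_B^+$. Your sentence ``their differing slopes cut off distinct chamber-wedges immediately above'' is not correct as stated: if two bands $B,B'$ (in different non-$p$ directions) happen to have \emph{both} boundary lines passing through the same pair of points $r_a,r_b$ on $H_2$, then the chamber adjacent to the segment $(r_a,r_b)$ from above is a single chamber lying in both strips, so $C_B^+=C_{B'}^+$. What actually rules this out is sharpness: the four bounding lines $L\parallel L'$, $M\parallel M'$ with $L\cap M=r_a$ and $L'\cap M'=r_b$ form a parallelogram whose remaining two vertices $L\cap M'$ and $L'\cap M$ are symmetric about the midpoint of $r_a r_b\in H_2$, forcing one of them strictly below $H_2$ --- contradicting the sharp-pair hypothesis. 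Once you insert this parallelogram argument, your proof is complete; note that the paper's proof glosses over the same point.
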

\begin{proof}
By the $PGL_3(\bC)$ action, we may assume that 
the line at infinity $H_\infty$ and $H_1=\{x=0\}$ 
form a sharp pair and that there are no intersections in 
the region $\{(x,y)\in\R^2\mid x<0\}$ (see Figure \ref{fig:A(12,2)}). 
The intersection is $p=H_\infty\cap H_1=\{(0:1:0)\}$. Let $B$ be 
a horizontal (that is, non-vertical) band, 
that does not passing through 
the point $p$. Denote by $C_B$ the leftmost bounded chamber 
in $B$ (e.g., in Figure \ref{fig:A(12,2)}, $C_{B_1}=C_1, 
C_{B_2}=C_3, C_{B_3}=C_4$ and $C_{B_4}=C_6$). 

First, consider the case where the multiplicity of $c\A$ at $p$ is 
not divisible by $k$. Then all $k$-resonant bands are horizontal. 
Let $B\in\RB_k(\A)$. Then 
\begin{equation}
\label{eq:nozero}
\nabla(B)=
2i\sin
\left(\frac{\pi}{k}\right)\cdot[C_B]+\cdots, 
\end{equation}
and so $[C_B]$ has a non-zero coefficient. 
Since $C_B$ is contained in the unique $k$-resonant band $B$, 
$[C_B]$ does not appear in the linear combinations of 
other $k$-resonant bands. Hence 
$\nabla(B), B\in\RB_k(\A)$ are linearly independent. 
Thus (ii) is proved. 

Now we assume that the multiplicity of $\A$ at $p$ is 
divisible by $k$. In this case, there are vertical 
$k$-resonant bands. Denote by $B_{left}$ the 
leftmost vertical band 
(in Figure \ref{fig:A(12,2)}, $B_{left}=B_5$). Suppose that 
$$
c_{left}\cdot B_{left}+\cdots \in\Ker(\nabla). 
$$
Let $B\in\RB_k(\A)$ be a horizontal $k$-resonant band. 
Then, since $C_B$ is contained in only $B$ and $B_{left}$, 
the coefficient $c_{left}$ of $B_{left}$ determines 
the coefficient of $B$. The coefficients of other 
vertical $k$-resonant bands are also determined by 
those of the horizontal bands. Hence $\Ker(\nabla)$ is 
at most one-dimensional. 
\end{proof}

\begin{example}
\label{ex:discrim}
Let $\A$ be as in Figure \ref{fig:discrim}, with $|c\A|=12$. 
Let $k=3$. Then $\RB_3=\{
B^1_1, B^1_2, B^1_3, B^1_4, 
B^2_1, B^2_2, B^2_3, B^2_4\}$ contains eight bands. Suppose that 
$\sum_{i=1}^2\sum_{j=1}^4 c_{ij}[B^i_j]\in\Ker(\nabla)$. 
By computing 
$\sum_{i=1}^2\sum_{j=1}^4 c_{ij}\nabla(B^i_j)$ 
as in the figure, we conclude that 
all the coefficients are $c_{ij}=0$. Hence 
$H^1(F_\A)_\lambda=0$ for $\lambda=e^{2\pi i/3}$. 
Note that the multiple points on the diagonal line are 
triple points. 
If we put the diagonal line at infinity, then $\RB_2=\RB_4=\RB_6=\emptyset$. 
Therefore $H^1(F_\A)_{-1}=H^1(F_\A)_i=H^1(F_\A)_{e^{2\pi i/6}}=0$ 
by Corollary \ref{cor:config}. 

\begin{figure}[htbp]
\begin{picture}(400,200)(0,0)
\thicklines

\multiput(50,40)(0,30){5}{\line(1,0){300}}
\multiput(100,0)(50,0){5}{\line(0,1){200}}
\put(50,10){\line(5,3){300}}
\put(310,185){\huge $\infty$}

\color{blue}
\put(120,5){$B^1_1$}
\put(170,5){$B^1_2$}
\put(220,5){$B^1_3$}
\put(270,5){$B^1_4$}

\color{red}
\put(60,50){$B^2_1$}
\put(60,80){$B^2_2$}
\put(60,110){$B^2_3$}
\put(60,140){$B^2_4$}

\normalcolor
\color{blue}
\put(115,43){\scriptsize ${\color{blue}c_{11}}{\color{red}+c_{21}}$}
\put(102,63){\scriptsize ${\color{blue}c_{11}}{\color{red}+c_{21}}$}
\put(118,83){\scriptsize ${\color{red} c_{22}}$}
\put(105,113){\scriptsize ${\color{blue}-c_{11}}{\color{red}+c_{23}}$}
\put(105,143){\scriptsize ${\color{blue}-c_{11}}{\color{red}+c_{24}}$}

\put(175,73){\scriptsize ${\color{blue}c_{12}}$}
\put(168,53){\scriptsize ${\color{blue} c_{12}}$}
\put(162,93){\scriptsize ${\color{red}c_{22}}$}
\put(155,113){\scriptsize ${\color{blue}-c_{12}}{\color{red}+c_{23}}$}
\put(155,143){\scriptsize ${\color{blue}-c_{12}}{\color{red}+c_{24}}$}

\put(210,53){\scriptsize ${\color{blue}c_{13}}{\color{red}-c_{21}}$}
\put(210,83){\scriptsize ${\color{blue}c_{13}}{\color{red}-c_{22}}$}
\put(225,103){\scriptsize ${\color{red}-c_{23}}$}
\put(212,123){\scriptsize ${\color{blue}-c_{13}}$}
\put(218,143){\scriptsize ${\color{blue} -c_{13}}$}

\put(260,53){\scriptsize ${\color{blue}c_{14}}{\color{red}-c_{21}}$}
\put(260,83){\scriptsize ${\color{blue}c_{14}}{\color{red}-c_{22}}$}
\put(268,113){\scriptsize ${\color{red}-c_{23}}$}
\put(260,132){\scriptsize ${\color{blue}-c_{14}}{\color{red}-c_{24}}$}
\put(250,154.5){\scriptsize ${\color{blue}-c_{14}}{\color{red}-c_{24}}$}

\end{picture}
      \caption{Example \ref{ex:discrim}}
\label{fig:discrim}
\end{figure}
\end{example}

\subsection{A characterization of the $A_3$-arrangement}
\label{sec:A3}

Now we give a characterization of the $A_3$-arrangement in terms of non-trivial 
Milnor monodromy. 

\begin{theorem}
\label{thm:A3}
Assume that $H^1(F_\A)_\lambda\neq 0$ with 
$\lambda=e^{2\pi i/k}\neq 1$, and that 
the set of $k$-resonant bands $\RB_k(\A)$ consists of 
at most two directions (this condition is equivalent 
to $H_\infty$ containing at most two multiple points 
which have multiplicities divisible by $k$). 
Then $c\A$ is equivalent to the $A_3$-arrangement. 
\end{theorem}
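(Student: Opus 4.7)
The plan is to apply Theorem~\ref{thm:main} and determine when the kernel of $\nabla$ can be non-zero under these hypotheses. First I would note, using Corollary~\ref{cor:empty} and Theorem~\ref{thm:1pt}, that $\RB_k(\A)$ must involve exactly two directions; a projective transformation places these along the horizontal and vertical axes. Writing $\A=\A_H\sqcup\A_V\sqcup\A_O$ with cardinalities $a,b,c$, Remark~\ref{rem:mult} together with $k\mid n+1$ gives $k\mid a+1$ and $k\mid b+1$, whence $k\mid c-1$; in particular $a,b\geq k-1\geq 2$ and $c\geq 1$.

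For the kernel analysis, write $\varphi=\sum_i\alpha_i[B^H_i]+\sum_j\beta_j[B^V_j]\in\Ker(\nabla)$. Each bounded chamber $C$ yields the vanishing relation
$$
\alpha_i\bigl(\lambda^{d^H_C/2}-\lambda^{-d^H_C/2}\bigr)+\beta_{j(C)}\bigl(\lambda^{d^V_C/2}-\lambda^{-d^V_C/2}\bigr)=0,
$$
with either term absent when $C$ lies in only one of $B^H_i, B^V_{j(C)}$. A bounded chamber in some $B^H_i$ but in no $k$-resonant vertical band thus gives a single-term relation which (for all chambers whose distance from $U_1(B^H_i)$ is not a multiple of $k$) forces $\alpha_i=0$. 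Arguing symmetrically, a non-zero $\varphi$ is possible only when every bounded chamber of $\A$ sits inside the ``grid rectangle'' $R$ cut out by the outermost horizontal and vertical lines, and every oblique line meets $\partial R$ only at grid corners---otherwise a new bounded chamber would appear outside $R$, or a grid cell would be split across two different strips.

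Finally I would pin down the combinatorial type. These constraints, combined with the upper bound $\dim\Ker(\nabla)\leq 1$ provided by Theorem~\ref{thm:sharp} (applied, for instance, to the sharp pair formed by $H_\infty$ and an outermost vertical line, which is sharp once no intersections lie outside $R$), together with the divisibility conditions $k\mid a+1$, $k\mid b+1$, $k\mid c-1$, force $a=b=k-1=2$, $c=1$, and the unique oblique line to pass through both diagonal corners of the $2\times 2$ grid, yielding the $A_3$-arrangement as in Example~\ref{ex:A(6,1)}. The main obstacle is the geometric reduction in the second paragraph: making the chamber-by-chamber cancellation uniform in $k$ (and ruling out the ``exceptional'' chambers whose coefficient in $\nabla(B^H_i)$ accidentally vanishes because the distance is a multiple of $k$) requires care, as does excluding intermediate parameter configurations such as $(a,b,c,k)=(5,2,1,3)$ that survive the divisibility count; once these are eliminated, the identification with $A_3$ is a direct check.
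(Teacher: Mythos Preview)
Your outline follows the paper's strategy closely: reduce to exactly two directions, set up the divisibility constraints $k\mid a+1$, $k\mid b+1$, $k\mid c-1$, and analyse $\Ker\nabla$ chamber by chamber. The two places you flag as obstacles are precisely where the paper supplies arguments your sketch is missing. For the geometric reduction, the paper sidesteps the ``accidental vanishing'' issue by always reading off the \emph{extremal} bounded chamber of a band, which has distance $1$ from $U_1(B)$ and hence coefficient $2i\sin(\pi/k)\neq 0$. Concretely: if an oblique line $K$ crossed the segment of $H^2_1$ to the left of the grid, the leftmost bounded chamber of $B^2_1$ would lie in no vertical band, forcing $c_{21}=0$; the bottom chambers of the $B^1_i$ then force all $c_{1i}=0$, and the remaining leftmost chambers kill the $c_{2j}$. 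Repeating on all eight corner segments shows any oblique line must be one of the two diagonals of $R$, so $c\leq 2$, and $k\mid c-1$ gives $c=1$.

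For the parameter count the paper does \emph{not} invoke Theorem~\ref{thm:sharp}. It first forces $k=3$ by applying Corollary~\ref{cor:config} to the outer line $H^1_1$: its only multiple points are the grid corner (multiplicity $3$) and the point at infinity, so $k\mid 3$. Next, $a\neq b$ is ruled out because then some horizontal or vertical line meets the single diagonal in a double point and hence carries only one multiple point, again contradicting Corollary~\ref{cor:config}; this is what eliminates your $(5,2,1,3)$. Finally, $a=b>2$ (with $k=3$) is excluded by the explicit computation of $\Ker\nabla$ carried out in Example~\ref{ex:discrim}, not by a dimension bound. Your sharp-pair observation is correct and does give $\dim\leq 1$, but it is not what drives the reduction to $a=b=2$.
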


\begin{proof}
If $\RB_k(\A)$ consists of one direction, then by Theorem \ref{thm:1pt}, 
$H^1(F_\A)_\lambda=0$. Thus we may assume that $\RB_k(\A)$ 
consists of two directions. After a suitable change of coordinates, 
we assume the following (see Figure \ref{fig:proof}): 
\begin{itemize}
\item 
$\RB_k(\A)=\{B^1_1, B^1_2, \dots, B^1_p, B^2_1, B^2_2, \dots, B^2_q\}$. 
\item 
$B^1_1, B^1_2, \dots, B^1_p$ are parallel to the 
vertical line $x=0$ and may be expressed as 
$B^1_i=\{(x, y)\in\R^2\mid a_i<x<a_{i+1}\}$ with 
$a_1<\dots<a_{p+1}$. The lines 
$H^1_i=\{x=a_i\}$, $i=1, \dots, p+1$, which are vertical lines, are boundaries of 
these bands. 
\item 
$B^2_1, B^2_2, \dots, B^2_q$ are parallel to the 
horizontal line $y=0$ and may be expressed as 
$B^2_i=\{(x, y)\in\R^2\mid b_i<y<b_{i+1}\}$ with 
$b_1<\dots<b_{q+1}$. The lines 
$H^2_i=\{y=b_i\}$, $i=1, \dots, q+1$, which are horizontal lines, are boundaries of 
these bands. 
\item 
Let 
$\sum\limits_{i=1}^p c_{1i}\cdot B^2_i+
\sum\limits_{i=1}^q c_{2i}\cdot B^2_i\in\Ker(\nabla)$ be a 
non-trivial relation among $k$-resonant bands. 
\end{itemize}

\begin{figure}[htbp]
\begin{picture}(400,150)(0,0)
\thicklines

\multiput(70,20)(30,0){5}{\line(0,1){130}}
\multiput(10,75)(0,25){3}{\line(1,0){230}}

\color{red}
\put(10,50){\line(1,0){60}}
\put(38,52){\small $\sigma$}

\normalcolor
\put(70,50){\line(1,0){170}}

\put(80,25){$B^1_1$}
\put(110,25){$B^1_2$}
\put(140,25){$\cdots$}
\put(170,25){$B^1_p$}

\put(65,8){\small $H^1_1$}
\put(95,8){\small $H^1_2$}
\put(185,8){\small $H^1_{p+1}$}

\put(15,58){$B^2_1$}
\put(15,83){$\ \ \vdots$}
\put(15,108){$B^2_q$}

\put(-3,47){\small $H^2_1$}
\put(-3,122){\small $H^2_{q+1}$}

\put(20,20){$K$}
\put(30,30){\line(1,1){120}}
\put(60,52){\small $C$}

\multiput(280,20)(50,0){3}{\line(0,1){130}}
\color{blue}
\put(305,20){\line(0,1){130}}
\normalcolor
\put(355,20){\line(0,1){130}}
\multiput(260,50)(0,40){3}{\line(1,0){140}}
\put(260,34){\line(5,4){140}}
\put(253,25){$K$}

\end{picture}
      \caption{Proof of Theorem \ref{thm:A3}}
\label{fig:proof}
\end{figure}
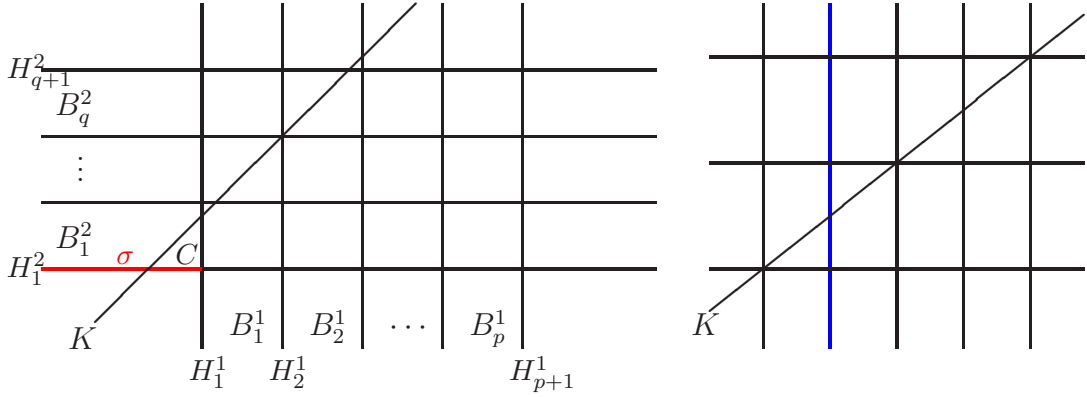

The multiplicity of $c\A$ at 
$(0:1:0)=\overline{B^1_i}\cap H_\infty$ is 
$p+2$, which must be divisible by $k$ by Proposition \ref{prop:multi}. 
Hence $p$ can be expressed as $p=ks-2$ $(s\in\bZ_{>0})$. 
Similarly, set $q=kt-2$ $(t\in\bZ_{>0})$. So far, together with $H_\infty$, 
we have $(s+t)k-1$ lines. The remaining $n+1-(s+t)k+1$ can be expressed 
as $ku+1$, which in particular cannot be zero. 
We prove that (1) $u=0$, (2) $k=3$, (3) $p=q$, (4) $p=q=1$, 
and conclude that $c\A$ is the $A_3$-arrangement. 

\medskip

(1) We first prove that $u=0$. Consider the open segment 
$$
\sigma=\{(x:y:1)\in\R^2\mid y=b_1, -\infty<x<a_1\}
\subset H^2_1 
$$
which is bounded by the two points 
$(1:0:0)$ and $(a_1:b_1:1)$. (See Figure \ref{fig:proof}.) 
Let us prove that there 
are no intersections on $\sigma$. 
Suppose that the line $K\in\A$ 
intersects $\sigma$. The leftmost chamber $C$ in $B^2_1$ is not contained 
in the other $k$-resonant bands and satisfies $d(U_1(B^2_1),C)=1$. 
Since 
$$
\nabla(B^1_1)=2i\sin\left(
\frac{\pi}{k}
\right)\cdot[C]+\cdots, 
$$
and the coefficient of $[C]$ is non-zero, 
we have $c_{21}=0$. This implies that 
$$
c_{11}=c_{12}=\cdots=c_{1p}=0. 
$$
Then we have $c_{22}=\cdots=c_{2q}=0$. This contradicts the hypothesis that 
$H^1(F_\A)_\lambda\neq 0$. This contradiction proves that 
there are no intersections on the segment $\sigma$. Similarly 
there are no intersections on the seven other similar segments, that is, 
the boundaries of the four regions 
\begin{equation*}
\begin{split}
\{(x:y:1)\mid x<a_1, y<b_1\},\ &
\{(x:y:1)\mid x<a_1, y>b_q\},\\
\{(x:y:1)\mid x>a_p, y<b_1\},\ &
\{(x:y:1)\mid x>a_p, y>b_q\}. 
\end{split}
\end{equation*}
Thus $K\in\A$ must be one of the two diagonals 
\begin{equation*}
\begin{split}
K_1&=\mbox{the line connecting $(a_1:b_1:1)$ and $(a_p:b_q:1)$}, 
\\
K_2&=\mbox{the line connecting $(a_p:b_1:1)$ and $(a_1:b_q:1)$}. 
\end{split}
\end{equation*}
Hence $ku+1\leq 2$, and we have $u=0$. 

(2) Now we prove $k=3$. Using the above notation, we may 
assume that $\A=\{H^1_1, \dots, H^1_{ks-1}, 
H^2_1, \dots, H^2_{kt-1}, K\}$, where $K$ is the diagonal line 
connecting $(a_1:b_1:1)$ and $(a_p:b_q:1)$. Then the point 
$(a_1:b_1:1)$ has multiplicity $3$. The line $H^1_1$ has 
exactly two multiple points, $(a_1:b_1:1)$ and $(0:1:0)$. 
By Corollary \ref{cor:config}, $k$ is a common divisor of 
$3$ and the multiplicity of $(0:1:0)$. Since $k\neq 1$, we have $k=3$. 

(3) If $p\neq q$, then there exists a (either vertical or horizontal) 
line which intersects the diagonal line $K$ normally (that is, 
with multiplicity $2$, the right-hand side of Figure \ref{fig:proof}). 
Then the line has only one multiple point on $H_\infty$ (either 
$(0:1:0)$ or $(1:0:0)$). 
This contradicts Corollary \ref{cor:config}. Hence 
$p=q$. 

(4) If $p=q>1$, then we can prove that $H^1(F_\A)_{e^{2\pi i/3}}=0$ 
by an argument similar to Example \ref{ex:discrim}. Hence 
$p=q=1$. This obviously implies that $c\A$ is isomorphic to 
the $A_3$-arrangement. 
\end{proof}

\begin{corollary}
\label{cor:3pts}
Assume that $\A$ is a real arrangement as above, 
and assume that $|c\A|=n+1\geq 7$. If $H^1(F_\A)_{\lambda}\neq 0$, then 
each line $H\in c\A$ passes through at least three multiple points 
which have multiplicities divisible by $k$. 
\end{corollary}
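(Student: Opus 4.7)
The plan is to derive Corollary \ref{cor:3pts} as a direct combination of Corollary \ref{cor:config}, Remark \ref{rem:change}, and Theorem \ref{thm:A3}. I would proceed by contradiction: assume there exists a line $H \in c\A$ passing through at most two multiple points whose multiplicities are divisible by $k$. By Corollary \ref{cor:config}(1), the hypothesis $H^1(F_\A)_\lambda \neq 0$ forces $H$ to contain at least two such points, so in fact $H$ contains exactly two.

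Next I would invoke Remark \ref{rem:change}: since $H^1(F_\A)$ depends only on the projective arrangement $c\A = \A \cup \{H_\infty\}$, I can change the affine chart so that the distinguished line $H$ becomes the new line at infinity, without altering $H^1(F_\A)_\lambda$. In this new chart, $H_\infty$ contains exactly two multiple points whose multiplicities are divisible by $k$. By Proposition \ref{prop:multi}, the $k$-resonant bands come in one family per such multiple point on $H_\infty$ (each family consisting of parallel bands sharing a common point at infinity). Therefore $\RB_k(\A)$ consists of bands in at most two directions.

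Now I am in the exact hypothesis of Theorem \ref{thm:A3}: $H^1(F_\A)_\lambda \neq 0$ and $\RB_k(\A)$ has at most two directions. The theorem then forces $c\A$ to be projectively equivalent to the $A_3$-arrangement, which has exactly $6$ lines. This contradicts the assumption $|c\A| = n+1 \geq 7$, completing the proof.

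There is essentially no obstacle here beyond the bookkeeping of the change of chart; the entire technical burden has already been discharged by Theorem \ref{thm:A3}. The only minor point worth emphasizing is that after moving $H$ to infinity, the multiplicities of multiple points on the new $H_\infty$ correspond precisely, via Proposition \ref{prop:multi}, to the directions of $k$-resonant bands, so the "at most two multiple points on $H$" assumption translates cleanly into the "at most two directions" hypothesis of Theorem \ref{thm:A3}.
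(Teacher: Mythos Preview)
Your proof is correct and is precisely the argument the paper intends: the corollary is stated without proof immediately after Theorem \ref{thm:A3}, and the parenthetical in that theorem's statement (``this condition is equivalent to $H_\infty$ containing at most two multiple points which have multiplicities divisible by $k$'') is exactly the translation you perform after moving $H$ to infinity via Remark \ref{rem:change}. The invocation of Corollary \ref{cor:config}(1) to pin down ``exactly two'' is harmless but not strictly needed, since Theorem \ref{thm:A3} already covers the ``at most two directions'' case.
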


\begin{remark}
We do not know whether Theorem \ref{thm:A3} and 
Corollary \ref{cor:3pts} hold for 
complex arrangements. 
\end{remark}

\section{Examples and Conjectures}
\label{sec:exconj}

By the previous result (Corollary \ref{cor:3pts}), the Milnor 
fiber cohomology has non-trivial eigenspaces 
only when each line has at least 
three multiple points. 
Classes of line arrangements 
known as ``simplicial arrangements'' and ``configurations'' 
provide such examples. In this section, we present examples of 
non-trivial eigenspaces $H^1(F_\A)_{\neq1}\neq0$.

\subsection{Observation}

As far as the author knows, all examples of real arrangements 
with $H^1(F_\A)_{\neq 1}\neq 0$ have the following 
``pure-tone'' property, that is, only the third root of $1$ appears with 
multiplicity one.  

\begin{definition}
\label{def:pure-tone}
$\A$ is said to be {\em pure-tone} if 
$H^1(F_\A)_\lambda=0$ for $\lambda^3\neq 1$ and 
$\dim H^1(F_\A)_\lambda=1$ for $\lambda=e^{\pm 2\pi i/3}$. 
\end{definition}
Furthermore, it is observed that 
all known examples with $H^1(F_\A)_{\neq 1}\neq 0$ 
satisfy 
\begin{itemize}
\item $c\A$ has a sharp pair of lines, 
\item $c\A$ has a $k$-multinet structure with $k=3$. 
\end{itemize}
These two properties imply by Theorem \ref{thm:sharp} and Theorem 
\ref{thm:multinet} that $\A$ is pure-tone. 

\subsection{Simplicial arrangements}
\label{sec:simp}

Let $\A=\{H_1, \dots, H_n\}$ be a line arrangement in $\R^2$. 
Then the projective arrangement $c\A=\A\cup\{H_\infty\}$ in the 
real projective plane $\R\bP^2$ 
is called {\em simplicial} if each chamber is a triangle. 
Gr\"unbaum \cite{gru-simp} presents a catalogue of 
known simplicial arrangements with up to $37$ lines 
(see \cite{cun} for additional information). 

\medskip

\noindent
{\bf Notation.} The symbol $\infty$ in a figure indicates that 
the $(n+1)$-st line is $H_\infty$. The notation $\A(n,k)$ 
comes from \cite{gru-simp}, which is the $k$-th simplicial 
arrangement of $n$-lines. 

\medskip

Example \ref{ex:A(6,1)} can be generalized in two ways. 

\begin{definition}
\label{def:A(2n,1)}
For a positive integer $n\in\bZ_{>0}$, 
$\A(2n,1)$ is described as follows. Starting with 
a regular convex $n$-gon in the Euclidean plane, 
$\A(2n,1)$ is obtained by taking $n$ lines 
determined by the sides of the $n$-gon together with the 
$n$-lines of symmetry of that $n$-gon. 
$\A(2n,1)$ is a simplicial arrangement of $2n$-lines. 
\end{definition}
Obviously, the $A_3$-arrangement is equivalent to $\A(6,1)$. 

\begin{example}
\label{ex:A(12,1)}
Let $c\A=\A(12,1)$ (Figure \ref{fig:A(12,1)}). 
Then $\RB_3(\A)=\{B_1, \dots, B_7\}$. 
\begin{equation*}
\begin{array}{clccccccccr}
\nabla(B_2)=&\sqrt{-3}(C_1&&+C_3&&-C_5&-C_6&&+C_8&&+C_{10})\\
\nabla(B_3)=&\sqrt{-3}(C_1&&+C_3&&&&-C_7&&-C_9&)\\
\nabla(B_6)=&\sqrt{-3}(&C_2&&+C_4&&&&-C_8&&-C_{10})\\
\nabla(B_7)=&\sqrt{-3}(&C_2&&+C_4&-C_5&-C_6&+C_7&&+C_9&)
\end{array}
\end{equation*}
Hence we have a linear relation 
$$
\nabla(B_2)-\nabla(B_3)
+\nabla(B_6)-\nabla(B_7)=0, 
$$
and so we have that $\A(12,1)$ is pure-tone. 
\begin{figure}[htbp]
\begin{picture}(400,200)(0,0)
\thicklines

\color{oliv}
\put(-10,75){\line(1,0){200}}
\put(-10,125){\line(1,0){200}}
\put(90,0){\line(0,1){200}}

\color{blue}
\put(46.69746,0){\line(0,1){200}}
\color{red}
\put(75.56582,0){\line(0,1){200}}
\color{blue}
\put(104.43418,0){\line(0,1){200}}
\color{red}
\put(133.30254,0){\line(0,1){200}}

\color{red}
\qbezier(190,199.1)(190,199.1)(-10,25.9)

\color{blue}
\qbezier(-10,0.9)(-10,0.9)(190,174.1)
\qbezier(-10,199.1)(-10,199.1)(190,25.9)

\color{red}
\qbezier(190,0.9)(190,0.9)(-10,174.1)

\color{oliv}
\put(10,190){\huge $\infty$}

\normalcolor
\put(0,95){$B_1$}
\put(0,170){$B_2$}
\put(55,190){$B_3$}
\put(76,190){$B_4$}
\put(91,190){$B_5$}
\put(113,190){$B_6$}
\put(170,170){$B_7$}

\put(54,130){\small $1$}
\put(121,130){\small $2$}
\put(66,109){\small $3$}
\put(109,109){\small $4$}
\put(83,96){\small $5$}
\put(93,96){\small $6$}
\put(66,83){\small $7$}
\put(107,83){\small $8$}
\put(50,63){\small $9$}
\put(119,63){\small $10$}

\color{oliv}
\put(210,100){\line(1,0){200}}
\put(285,0){\line(0,1){200}}
\put(310,0){\line(0,1){200}}
\put(335,0){\line(0,1){200}}

\color{blue}
\qbezier(410,128.86)(410,128.86)(210,13.4)
\qbezier(410,157.73)(410,157.73)(210,42.27)
\qbezier(410,186.6)(410,186.6)(210,71.14)

\qbezier(367.734,0)(310,100)(252.266,200)

\color{red}
\qbezier(210,128.86)(210,128.86)(410,13.4)
\qbezier(210,157.73)(210,157.73)(410,42.27)
\qbezier(210,186.6)(210,186.6)(410,71.14)

\qbezier(367.734,200)(310,100)(252.266,0)

\end{picture}
      \caption{$\A(12,1)$}
\label{fig:A(12,1)}
\end{figure}
\end{example}



More generally, using Theorem \ref{thm:multinet} and 
Theorem \ref{thm:sharp}, 
we can prove that 
$\A(6m,1)$ is pure-tone. 
All other examples except for $\A(6m,1)$ in 
the catalogue \cite{gru-simp} (and \cite{cun}) satisfy 
$H^1(F_\A)_{\neq 1}=0$. It seems natural to pose the following. 

\begin{conjecture}
Assume that $c\A$ is a simplicial arrangement. Then the 
following are equivalent. 
\begin{itemize}
\item[(a)] $c\A=\A(6m,1)$ for some $m>0$. 
\item[(b)] $H^1(F_\A)_{\neq 1}\neq 0$. 
\item[(c)] $\A$ is pure-tone. 
\item[(d)] $c\A$ has a $k$-multinet structure for some $k\geq 3$. 
\item[(e)] $c\A$ has a $3$-multinet structure. 
\end{itemize}
\end{conjecture}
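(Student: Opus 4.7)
The plan is to break the equivalence into a cycle of implications, isolating the one genuinely difficult direction. The chain (a) $\Rightarrow$ (e) $\Rightarrow$ (d) $\Rightarrow$ (b) should be tractable. For (a) $\Rightarrow$ (e), I would exhibit a $3$-multinet on $\A(6m,1)$ by partitioning its $6m$ lines into three classes of $2m$ using the order-$3$ rotational symmetry of the underlying regular $(3m)$-gon construction, then verify multinet axioms (i)--(iv) directly from this symmetry. The implication (e) $\Rightarrow$ (d) is trivial, and (d) $\Rightarrow$ (b) follows immediately from Theorem~\ref{thm:multinet} applied with $k = 3$. The implication (c) $\Rightarrow$ (b) is immediate from the definition of pure-tone.

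For (a) $\Rightarrow$ (c), my approach would combine the $3$-multinet structure on $\A(6m,1)$ with the existence of a sharp pair. I would exhibit a sharp pair of lines in $\A(6m,1)$ explicitly (two adjacent sides of the regular polygon, or the line at infinity together with a line of symmetry when the arrangement is placed in a suitable affine chart). Theorem~\ref{thm:sharp}(i) then forces $\dim H^1(F_\A)_\lambda \leq 1$ for each $\lambda \neq 1$; this matches the multinet lower bound at $\lambda = e^{\pm 2\pi i/3}$. Theorem~\ref{thm:sharp}(ii) combined with a count of the multiplicity at the sharp-pair intersection should rule out eigenspaces at $\lambda^3 \neq 1$, as that multiplicity is divisible by $3$ but not by other relevant orders, so those eigenspaces vanish and $\A(6m,1)$ is pure-tone.

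The main obstacle, and the reason this is phrased as a conjecture, is the direction (b) $\Rightarrow$ (a), which is essentially a classification problem. I would argue by contradiction: assume $c\A$ is simplicial, $H^1(F_\A)_{\neq 1} \neq 0$, and $c\A \neq \A(6m,1)$. Corollary~\ref{cor:3pts} gives, when $|c\A| \geq 7$, that each line carries at least three multiple points of multiplicity divisible by $k$, where $\lambda = e^{2\pi i/k}$ is the relevant eigenvalue; the small cases would be checked by hand. Simpliciality forces every chamber to be a triangle, which severely constrains the distribution of multiple points along each line. Theorem~\ref{thm:A3} already settles the case in which $k$-resonant bands occur in at most two directions, forcing $c\A = \A(6,1)$; one would attempt to extend this by induction on the number of directions or the size of $c\A$, using the resonant band algorithm (Theorem~\ref{thm:main}) to produce linear relations or, failing that, independence among standing waves, matched against the simpliciality constraints. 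The honest difficulty is that simplicial arrangements themselves are not fully classified beyond Gr\"unbaum's catalogue; while the algorithm verifies the conjecture case by case on all \emph{known} examples, a uniform proof seems to require either a structural theorem about simplicial arrangements admitting many $k$-divisible multiplicities, or an a priori combinatorial characterization of those admitting a $3$-multinet, neither of which appears currently available.
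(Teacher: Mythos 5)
The statement you are asked to prove is posed in the paper as a \emph{conjecture}: the paper contains no proof of it, only supporting evidence, so there is no ``paper proof'' for your argument to be measured against. What the paper actually establishes is exactly the tractable part of your sketch: using Theorem \ref{thm:multinet} (the $3$-multinet lower bound) together with Theorem \ref{thm:sharp} (the sharp-pair upper bound and vanishing criterion), the author asserts that $\A(6m,1)$ is pure-tone, i.e.\ essentially your implications (a) $\Rightarrow$ (e) $\Rightarrow$ (d) $\Rightarrow$ (b) and (a) $\Rightarrow$ (c); the converse direction is supported only by case-by-case verification, via the resonant band algorithm of Theorem \ref{thm:main}, on the arrangements in Gr\"unbaum's catalogue. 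Your decomposition is logically sound (with (b) $\Rightarrow$ (a) added, the cycle (a) $\Rightarrow$ (e) $\Rightarrow$ (d) $\Rightarrow$ (b) $\Rightarrow$ (a) together with (a) $\Rightarrow$ (c) $\Rightarrow$ (b) does yield the full equivalence), and your identification of (b) $\Rightarrow$ (a) as the genuinely open classification problem is accurate and matches the reason the paper states this only as a conjecture.

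The genuine gap, then, is the one you name yourself: nothing in the paper, and nothing in your proposal, proves (b) $\Rightarrow$ (a). Corollary \ref{cor:3pts} and Theorem \ref{thm:A3} only handle the situation where the $k$-resonant bands lie in at most two directions, and your suggestion to ``extend by induction on the number of directions'' is speculative --- no such induction is carried out, and the lack of a classification of simplicial arrangements is a real obstruction. Two smaller points also deserve care if you were to write up the provable part: for (a) $\Rightarrow$ (e) the multinet axioms (i)--(iv) must actually be checked for the rotation-equivariant partition of $\A(6m,1)$, not just asserted from symmetry; and for (a) $\Rightarrow$ (c) the vanishing at all $\lambda$ with $\lambda^3\neq 1$ requires, for \emph{each} divisor $k$ of $6m$ with $k\neq 1,3$, a sharp pair whose intersection multiplicity is not divisible by that $k$ (possibly different pairs for different $k$), which is a concrete verification rather than a one-line consequence of Theorem \ref{thm:sharp}(ii). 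With those caveats, your proposal reproduces the paper's partial results and correctly leaves open exactly what the paper leaves open.
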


%

\subsection{Zoo of non-trivial eigenspaces}
\label{sec:zoo}

\begin{example}
\label{ex:pappus}
Let $c\A$ be the Pappus arrangement (Figure \ref{fig:pappus}), 
so that $|c\A|=n+1=9$. Let $k=3$. Then $\RB_3(\A)=\{B_1, B_2, B_3\}$. 
By the expressions 
$$
\begin{array}{rlrcccccr}
\nabla(B_1)=&\sqrt{-3}(C_1&&+C_3&&&-C_9&&-C_{11})\\
\nabla(B_2)=&\sqrt{-3}(C_1&+C_2&+C_3&+C_4&-C_8&-C_9&-C_{10}&-C_{11})\\
\nabla(B_3)=&\sqrt{-3}(&C_2&&+C_4&-C_8&&-C_{10}&) 
\end{array}
$$
there is a unique relation 
$\nabla(B_1)-\nabla(B_2)+\nabla(B_3)=0$. Hence 
the Pappus arrangement is pure-tone. 
\begin{figure}[htbp]
\begin{picture}(400,150)(0,0)
\thicklines

\color{blue}
\put(280,120){\huge $\infty$}
\qbezier(300,107.5)(200,95)(100,82.5)
\put(100,60){\line(1,0){200}}

\normalcolor

\color{red}
\put(160,0){\line(0,1){150}}

\color{oliv}
\put(240,0){\line(0,1){150}}

\put(100,0){\line(1,1){150}}

\color{red}
\put(140,0){\line(1,1){150}}

\color{oliv}
\put(280,0){\line(-4,3){200}}

\color{red}
\put(320,0){\line(-4,3){200}}

\normalcolor
\put(105,140){$B_1$}
\put(195,140){$B_2$}
\put(260,140){$B_3$}

\put(165,97){\small $C_1$}
\put(220,105){\small $C_2$}
\put(170,83){\footnotesize $C_3$}
\put(212,86){\footnotesize $C_4$}
\put(161,73){\footnotesize $C_5$}
\put(195,73){\footnotesize $C_6$}
\put(228,73){\footnotesize $C_7$}
\put(173,63){\footnotesize $C_8$}
\put(216,63){\footnotesize $C_9$}
\put(165,44){\small $C_{10}$}
\put(223,44){\small $C_{11}$}

\end{picture}
      \caption{Pappus arrangement (Example \ref{ex:pappus})}
\label{fig:pappus}
\end{figure}
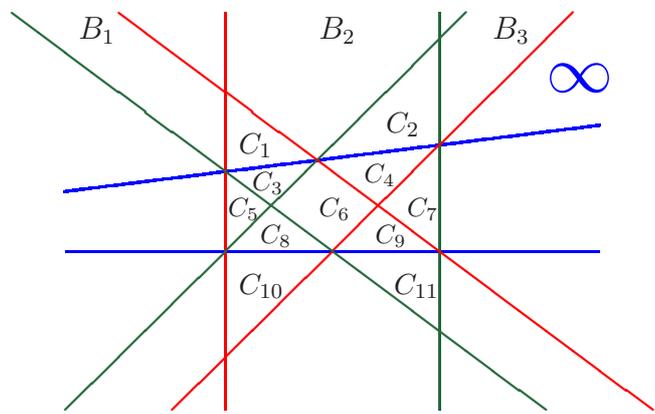

\end{example}

\begin{example}
\label{ex:page244-1}
(Taken from \cite[page 244]{gru-conf}.) 
Let $\A$ be as in the right-hand side of Figure \ref{fig:244-1}. 
Then $|c\A|=n+1=12$ and $\RB_3(\A)=\{B_1, \dots, B_5\}$. There 
is a unique linear relation 
$$
\nabla(B_1)
-\nabla(B_2)
+\nabla(B_3)
-\nabla(B_4)
=0
$$
($B_5$ does not appear). Hence $\A$ is pure-tone. 

\begin{figure}[htbp]
\begin{picture}(400,200)(0,0)
\thicklines
\color{blue}
\qbezier(90,0)(90,100)(90,200)

\color{blue}
\qbezier(-10,107.143)(90,107.143)(190,107.143)

\color{blue}
\qbezier(88.5714,,)(108.571,100)(128.571,200)

\color{blue}
\qbezier(91.4286,0)(71.4286,100)(51.4286,200)

\color{red}
\qbezier(-10.,42.265)(90.,100.)(190.,157.735)

\color{oliv}
\qbezier(190.,42.265)(90.,100.)(-10.,157.735)

\color{oliv}
\qbezier(40.5128,0)(98.2479,100)(155.983,200)

\color{red}
\qbezier(139.487,0)(81.7521,100)(24.0171,200)

\color{red}
\qbezier(3.79876,0)(117.587,100)(190.,163.639)

\color{oliv}
\qbezier(-10.,163.639)(62.4134,100)(176.201,0)

\color{oliv}
\qbezier(-10,153.053)(90.,119.225)(190.,85.3958)

\color{red}
\qbezier(190,153.053)(90.,119.225)(-10.,85.3958)

\normalcolor
\color{blue}
\put(390,140){\huge $\infty$}

\color{blue}
\qbezier(268.333,0)(310.,100)(351.667,200)

\color{blue}
\qbezier(210.,133.846)(310.,115.385)(410.,96.9231)

\color{blue}
\qbezier(210.,103.077)(310.,84.6154)(410.,66.1538)

\color{red}
\qbezier(319.623,0)(319.623,100)(319.623,200)

\color{oliv}
\qbezier(300.377,0)(300.377,100)(300.377,200)

\color{oliv}
\qbezier(255.534,200)(338.868,100)(410.,14.641)

\color{red}
\qbezier(210.,185.359)(281.132,100)(364.466,0)

\color{red}
\qbezier(210.,181.067)(310.,110.357)(410.,39.647)

\color{oliv}
\qbezier(210.,160.353)(310.,89.643)(410.,18.933)

\color{oliv}
\qbezier(210.,15.856)(310.,105.028)(410.,194.199)

\color{red}
\qbezier(210.,5.80081)(310.,94.9724)(410.,184.144)

\normalcolor
\put(210,12){\tiny $B_1$}
\put(305,10){$B_2$}
\put(390,5){$B_3$}
\put(408,23){$B_4$}
\put(400,75){$B_5$}

\end{picture}
      \caption{Example \ref{ex:page244-1}}
\label{fig:244-1}
\end{figure}
\end{example}

\begin{example}
\label{ex:page244-2}
(Taken from \cite[page 244]{gru-conf}.) 
Let $\A$ be as in the right-hand side of Figure \ref{fig:244-2}. 
Then $|c\A|=n+1=12$ and $\RB_3(\A)=\{B_1, \dots, B_4\}$. There 
is a unique linear relation 
$$
\nabla(B_1)
-\nabla(B_2)
+\nabla(B_3)
-\nabla(B_4)
=0. 
$$
Hence $\A$ is pure-tone. 
\begin{figure}[htbp]
\begin{picture}(400,200)(0,0)
\thicklines
\color{oliv}
\qbezier(90,0)(90,100)(90,200)

\color{red}
\qbezier(-10.,42.265)(90.,100)(190.,157.735)

\color{blue}
\qbezier(190.,42.265)(90.,100)(-10.,157.735)

\color{oliv}
\qbezier(-10.,125)(90.,125)(190.,125)

\color{oliv}
\qbezier(-10.,106.25)(90.,106.25)(190.,106.25)

\color{oliv}
\qbezier(-10.,68.75)(90.,68.75)(190.,68.75)

\color{blue}
\qbezier(-3.81942,0)(53.9156,100)(111.651,200)

\color{blue}
\qbezier(39.4819,0)(97.2169,100)(154.952,200)

\color{blue}
\qbezier(61.1325,0)(118.868,100)(176.603,200)

\color{red}
\qbezier(183.819,0)(126.084,100)(68.3494,200)

\color{red}
\qbezier(140.518,0)(82.7831,100)(25.0481,200)

\color{red}
\qbezier(118.868,0)(61.1325,100)(3.39746,200)

\color{oliv}
\put(380,185){\huge $\infty$}

\color{red}
\qbezier(324.434,0)(324.434,100)(324.434,200)

\color{blue}
\qbezier(295.566,0)(295.566,100)(295.566,200)

\color{oliv}
\qbezier(210.,33.3333)(310.,100)(410.,166.667)

\color{oliv}
\qbezier(272.5,0)(310.,100)(347.5,200)

\color{oliv}
\qbezier(210.,153.333)(310.,100)(410.,46.6667)

\color{blue}
\qbezier(210.,61.7863)(310.,88.453)(410.,115.12)

\color{blue}
\qbezier(278.301,200)(345.801,110)(410.,24.4017)

\color{blue}
\qbezier(210.,147.767)(310.,114.434)(410.,81.1004)

\color{red}
\qbezier(210.,84.8803)(310.,111.547)(410.,138.214)

\color{red}
\qbezier(210.,175.598)(274.199,90)(341.699,0)

\color{red}
\qbezier(210.,118.9)(310.,85.5662)(410.,52.2329)

\normalcolor
\put(210,70){$B_1$}
\put(305,5){$B_2$}
\put(370,20){$B_3$}
\put(400,65){$B_4$}

\end{picture}
      \caption{Example \ref{ex:page244-2}}
\label{fig:244-2}
\end{figure}

\end{example}

\begin{example}
\label{ex:page44}
(Taken from \cite[page 44]{gru-conf}.) 
Let $\A$ be as in the right-hand side of Figure \ref{fig:44}. 
Then $|c\A|=n+1=15$ and $\RB_3(\A)=\{B_1, \dots, B_7\}$. There 
is a unique linear relation 
$$
\nabla(B_1)
-\nabla(B_3)
+\nabla(B_4)
-\nabla(B_6)
+\nabla(B_7)
=0
$$
($B_2$ and $B_5$ do not appear). Hence $\A$ is pure-tone. 

\begin{figure}[htbp]
\begin{picture}(400,200)(0,0)
\thicklines

\color{red}
\qbezier(90,0)(90,100)(90,200)

\color{blue}
\qbezier(-10,42.265)(90,100)(190,157.735)

\color{oliv}
\qbezier(-10,157.735)(90,100)(190,42.265)

\color{blue}
\qbezier(88.4848,0)(97.5758,100)(106.667,200)

\color{oliv}
\qbezier(91.5152,0)(82.4242,100)(73.3333,200)

\color{red}
\qbezier(-10,20.24)(90,90.7677)(190,161.295)

\color{oliv}
\qbezier(-10,62.0932)(90,108.311)(190,154.53)

\color{red}
\qbezier(190,20.24)(90,90.7677)(-10,161.295)

\color{blue}
\qbezier(190,62.0932)(90,108.311)(-10,154.53)

\color{blue}
\qbezier(56.6065,0)(74.7252,100)(92.8438,200)

\color{oliv}
\qbezier(123.394,0)(105.275,100)(87.1562,200)

\color{oliv}
\qbezier(-10,48.1678)(90,84.0325)(190,119.897)

\color{red}
\qbezier(-10,34.9828)(90,119.698)(184.789,200)

\color{red}
\qbezier(190,34.9828)(90,119.698)(-4.78946,200)

\color{blue}
\qbezier(190,48.1678)(90,84.0325)(-10,119.897)


\color{red}
\put(390,160){\huge $\infty$}

\color{blue}
\qbezier(317.217,0)(317.217,100)(317.217,200)

\color{oliv}
\qbezier(302.783,0)(302.783,100)(302.783,200)

\color{blue}
\qbezier(210,193.75)(310,143.75)(410,93.75)

\color{oliv}
\qbezier(210.,106.25)(310.,56.25)(410.,6.25)

\color{red}
\qbezier(355.769,0)(319.812,100)(283.855,200)

\color{oliv}
\qbezier(284.54,0)(315.017,100)(345.495,200)

\color{red}
\qbezier(336.145,0)(300.188,100)(264.231,200)

\color{blue}
\qbezier(274.505,0)(304.983,100)(335.46,200)

\color{blue}
\qbezier(210.,55.3418)(310.,78.0847)(410.,100.828)

\color{oliv}
\qbezier(210.,99.1724)(310.,121.915)(410.,144.658)

\color{oliv}
\qbezier(376.478,0)(315.206,100)(253.933,200)

\color{red}
\qbezier(248.36,0)(319.554,100)(390.747,200)

\color{red}
\qbezier(229.253,0)(300.446,100)(371.64,200)

\color{blue}
\qbezier(366.067,0)(304.794,100)(243.522,200)

\normalcolor
\put(215,75){$B_1$}
\put(236,0){\scriptsize $B_2$}
\put(275,0){\scriptsize $B_3$}
\put(305,0){\scriptsize $B_4$}
\put(343,0){\scriptsize $B_5$}
\put(366,0){\scriptsize $B_6$}
\put(400,50){$B_7$}

\end{picture}
     \caption{Example \ref{ex:page44}}\label{fig:44}
\end{figure}
\end{example}

\begin{definition}
\label{def:B3m}
For a positive integer $m\in\bZ_{>0}$, $\scB_{3m}$ is described 
as follows. Starting with a regular convex $2m$-gon in the 
Euclidean plane, $\scB_{3m}$ is obtained by taking $2m$ lines 
determined by the sides of the $2m$-gon together with 
$m$-diagonal lines connecting opposite vertices. (Note 
that $\scB_6$ is equivalent to the $A_3$-arrangement, 
see Figure \ref{fig:A3}.) 
\end{definition}

\begin{example}
\label{ex:B3m}
Using Theorem \ref{thm:multinet} and Theorem \ref{thm:sharp}, 
we can prove that the $\scB_{3m}$-arrangement is pure-tone. 

\begin{figure}[htbp]
\begin{picture}(400,200)(0,0)
\thicklines

\color{red}
\qbezier(110,0)(110,100)(110,200)

\color{blue}
\qbezier(70,0)(70,100)(70,200)

\color{blue}
\qbezier(187.376,0)(114.721,100)(42.0671,200)

\color{red}
\qbezier(137.933,0)(65.2786,100)(-7.37561,200)

\color{red}
\qbezier(-10.,153.521)(90.,121.029)(190.,88.5373)

\color{blue}
\qbezier(-10.,111.463)(90.,78.9708)(190.,46.4788)

\color{blue}
\qbezier(190.,153.521)(90.,121.029)(-10.,88.5373)

\color{red}
\qbezier(190.,111.463)(90.,78.9708)(-10.,46.4788)

\color{red}
\qbezier(137.933,200)(65.2786,100)(-7.37561,0)

\color{blue}
\qbezier(187.376,200)(114.721,100)(42.0671,0)

\color{oliv}
\qbezier(90.,0)(90.,100)(90.,200)

\qbezier(162.654,0)(90.,100)(17.3457,200)

\qbezier(-10.,132.492)(90.,100)(190.,67.508)

\qbezier(190.,132.492)(90.,100)(-10.,67.508)

\qbezier(162.654,200)(90.,100)(17.3457,0)
\normalcolor


\color{red}
\qbezier(330,0)(330,100)(330,200)

\color{red}
\qbezier(290,0)(290,100)(290,200)

\color{blue}
\qbezier(390.829,0)(333.094,100)(275.359,200)

\color{blue}
\qbezier(344.641,0)(286.906,100)(229.171,200)

\color{red}
\qbezier(210.,180.829)(310.,123.094)(410.,65.359)

\color{red}
\qbezier(210.,134.641)(310.,76.906)(410.,19.171)

\color{blue}
\qbezier(210.,120.)(310.,120)(410.,120)

\color{blue}
\qbezier(210.,80.)(310.,80)(410.,80)

\color{red}
\qbezier(410.,180.829)(310.,123.094)(210.,65.359)

\color{red}
\qbezier(410.,134.641)(310.,76.906)(210.,19.171)

\color{blue}
\qbezier(344.641,200)(286.906,100)(229.171,0)

\color{blue}
\qbezier(390.829,200)(333.094,100)(275.359,0)

\color{oliv}
\qbezier(336.795,0)(310.,100)(283.205,200)

\qbezier(210.,200)(310.,100)(410.,0)

\qbezier(210.,126.795)(310.,100)(410.,73.2051)

\qbezier(410.,126.795)(310.,100)(210.,73.2051)

\qbezier(210.,0)(310.,100)(410.,200)

\qbezier(336.795,200)(310.,100)(283.205,0)

\end{picture}
     \caption{$\scB_{15}$ and $\scB_{18}$}\label{fig:B3m}
\end{figure}
\end{example}










\medskip

\noindent
{\bf Acknowledgement.} 
Part of this work was done while the author 
was visiting Universidad de Zaragoza. 
The author gratefully acknowledges 
Professor E. Artal Bartolo and Professor J. I. Cogolludo-Agust\'in 
for their support, hospitality and encouragement. 
The author also thanks Michele Torielli for the comments to the 
preliminary version of this paper. 
This work is supported by a JSPS Grant-in-Aid for Young Scientists (B). 


\end{document}